\documentclass[11pt,a4paper,reqno]{amsart}
\usepackage[T1]{fontenc}
\usepackage{lmodern}
\usepackage{geometry}
\usepackage[utf8]{inputenc}
\usepackage{CJK} 

\usepackage{microtype}

\usepackage{amsmath}

\usepackage{amsfonts}

\usepackage{amsmath,amsthm,amssymb}
\usepackage[abbrev]{amsrefs}

\allowdisplaybreaks

\numberwithin{equation}{section}

\newtheorem{theorem}{Theorem}[section]
\newtheorem{lemma}[theorem]{Lemma}
\newtheorem{proposition}[theorem]{Proposition}
\theoremstyle{definition}

\newtheorem{remark}[theorem]{Remark}

\numberwithin{equation}{section}

\newcommand{\abs}[1]{\lvert#1\rvert}
\newcommand{\bigabs}[1]{\bigl\lvert#1\bigr\rvert}
\newcommand{\st}{\;\vert\;}
\newcommand{\dif}{\,\mathrm{d}}
\DeclareMathOperator{\supp}{supp}

\title[Choquard equations under confining potentials]
      {Choquard equations under confining external potentials}

\subjclass[2010]{35J91 (35A23, 335J20, 35R09, 46E35)}
\keywords{Nonlocal semilinear elliptic problem; weighted Sobolev embedding theorem; groundstate; fountain theorem; least action nodal solution.}
\author[J. Van Schaftingen]{Jean Van Schaftingen}
\address{Jean Van Schaftingen\\ Institut de Recherche en Math\'{e}matique et Physique\\
Universit\'{e} Catholique de Louvain, Chemin du Cyclotron 2 bte L7.01.01, 1348 Louvain-la-Neuve, Belgium}
\email{Jean.VanSchaftingen@UCLouvain.be}

\author[J. Xia]{Jiankang Xia (夏健康)}
\address{Jiankang Xia\\ Chern Institute of Mathematics\\
Nankai University, Tianjin, 300071, China}
\email{fyxt001@163.com}

\begin{document}

\begin{CJK}{UTF8}{gbsn}

\begin{abstract}
We consider the nonlinear Choquard equation
$$
-{\Delta}u+V u=\bigl(I_\alpha \ast \vert u\vert ^p\bigr)\vert u\vert ^{p-2}u \qquad \text{ in } \mathbb{R}^N
$$
where $N\geq 1$, $I_\alpha$ is the Riesz potential integral operator of order $\alpha \in (0, N)$ and $p > 1$.
If the potential $ V \in C (\mathbb{R}^N; [0,+\infty)) $ satisfies the confining condition
$$
\liminf\limits_{\vert x\vert \to +\infty}\frac{V(x)}{1+\vert x\vert ^{\frac{N+\alpha}{p}-N}}=+\infty,
$$
and \(\frac{1}{p} > \frac{N - 2}{N + \alpha}\),
we show the existence of a groundstate, of an infinite sequence of solutions of unbounded energy and, when \(p \ge 2\) the existence of least energy nodal solution.
The constructions are based on suitable weighted compact embedding theorems.
The growth assumption is sharp in view of a Poho\v{z}aev identity that we establish.
\end{abstract}

\maketitle

\end{CJK}
 
\section{Introduction and main results}
We are interested in the following class of Choquard equations
\begin{align}
\tag{$\mathcal{C}$}\label{eq1.1}
  -{\Delta} u + V u&=\bigl(I_\alpha \ast \vert u\vert ^p\bigr)\vert u\vert ^{p-2}u&
  &\text{ in } \mathbb{R}^N
\end{align}
in the Euclidean space \(\mathbb{R}^N\) of dimension $N\geq 1$, where
$I_\alpha:\mathbb{R}^N\to \mathbb{R}$ is the Riesz potential of order $\alpha\in(0,N)$, which is defined for every $x\in\mathbb{R}^N\backslash{\{0\}}$ by
\[
  I_{\alpha}(x)=\frac{A_\alpha}{\vert x\vert ^{N-\alpha}}, \text{ with } A_\alpha=\frac{\Gamma(\frac{N-\alpha}{2})}{\Gamma(\frac{\alpha}{2})\pi^{\frac{N}{2}}}
\]
where $\Gamma$ denotes the classical Gamma function, and $p>1$ is a given exponent.

When \(N = 3\), \(\alpha = 2\) and \(p = 2\), the equation \eqref{eq1.1} appears
in several physical contexts, such as standing waves for the Hartree equation, the description by Pekar of the quantum physics of a polaron
at rest \cite{P}, the description by Choquard of an electron trapped in its own hole \cite{L} or the coupling of the Schr\"odinger equation
under a classical Newtonian gravitational potential \cites{Diosi1984,J1,J2,MPT,Penrose1996}.

When the potential \(V\) is a positive constant function, groundstate solutions are known to exist \cites{L,Lions,MV}
under the assumption that the exponent \(p\) satisfies
\begin{equation}
\label{eqInterCritical}
   \frac{N - 2}{N + \alpha} < \frac{1}{p} < \frac{N}{N + \alpha}.
\end{equation}
Moreover, infinitely many geometrically distinct solutions can be constructed \cites{Lions}.
We refer the reader to the survey \cite{MVSReview} for further discussion and references on the Choquard equation.

The goal of the present work is to examine how the presence of a confining potential \(V\) changes and possibly improves the situation.
Our first result is that groundstates can exist in a \emph{wider range of nonlinearities} when the external potential \(V\) is coercive enough.

\begin{theorem}\label{thm1.2}
Let $N\geq 1$, \(\alpha \in (0, N)\), $p \in (1, +\infty)$ and \(V \in C (\mathbb{R}^N;[0, +\infty))\). If
\[
  \frac{1}{p} > \frac{N - 2}{N + \alpha}
\]
and if
\[
  \liminf\limits_{\vert x\vert \to +\infty}\frac{V(x)}{1+\vert x\vert ^{\frac{N+\alpha}{p}-N}}=+\infty,
\]
then the Choquard equation \eqref{eq1.1} has a groundstate solution. %
\end{theorem}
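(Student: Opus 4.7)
The plan is to set up a variational framework on an appropriate weighted Hilbert space and to derive existence from a weighted compact embedding that turns the nonlocal nonlinearity into a weakly continuous functional. First I would introduce the work space
\[
X \;=\; \Bigl\{\, u \in H^1(\mathbb{R}^N) \;:\; \int_{\mathbb{R}^N} V u^2 < +\infty \,\Bigr\},
\]
endowed with the Hilbert norm $\|u\|_X^2 = \int_{\mathbb{R}^N}(\vert\nabla u\vert^2 + V u^2)$, on which the energy functional
\[
\mathcal{I}(u) \;=\; \frac{1}{2}\|u\|_X^2 \;-\; \frac{1}{2p}\int_{\mathbb{R}^N}\!\!\int_{\mathbb{R}^N} \frac{\vert u(x)\vert^p\,\vert u(y)\vert^p}{\vert x-y\vert^{N-\alpha}}\dif x\dif y
\]
is naturally associated to the equation. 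Groundstates will be sought as minimizers of $\mathcal{I}$ on the Nehari manifold $\mathcal{N}=\{u \in X\setminus\{0\}:\langle \mathcal{I}'(u),u\rangle=0\}$.

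The crux is a suitable weighted compact embedding. By Hardy--Littlewood--Sobolev the Choquard term is controlled by $\|u\|_{2Np/(N+\alpha)}^{2p}$, and the hypothesis $\tfrac{1}{p} > \tfrac{N-2}{N+\alpha}$ guarantees that this exponent sits at or below the critical Sobolev level. To prove compactness of the embedding of $X$ into the appropriate Lebesgue space I would split $\mathbb{R}^N$ into a large ball $B_R$ and its complement: on $B_R$ the classical Rellich--Kondrachov theorem yields strong convergence in $L^q$ for every $q<2^*$, while outside $B_R$ the confining growth of $V$ supplies the missing mass control through the tail bound $\int_{\vert x\vert>R}u^2 \leq (\inf_{\vert x\vert>R} V)^{-1}\|u\|_X^2$. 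When $p \geq (N+\alpha)/N$ the relevant exponent $2Np/(N+\alpha)$ lies in $[2,2^*)$ and interpolation suffices; when $p < (N+\alpha)/N$ this exponent drops below $2$ and the precise polynomial rate $\vert x\vert^{(N+\alpha)/p-N}$ imposed on $V$ is exactly what is needed to estimate the tail in this subquadratic norm via H\"older's inequality. Once this embedding is compact, the nonlocal functional $u\mapsto \iint \vert u\vert^p\vert u\vert^p/\vert x-y\vert^{N-\alpha}$ becomes weakly continuous on $X$ and its Fr\'echet derivative weak-to-strong continuous.

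With the compact embedding in hand the variational argument becomes classical. The mountain-pass geometry of $\mathcal{I}$ follows from $\mathcal{I}(u)\geq c\|u\|_X^2-C\|u\|_X^{2p}$ for small $u$ and from $\mathcal{I}(t u)\to-\infty$ as $t\to +\infty$ since $2p>2$. Every Palais--Smale sequence is bounded in $X$ (standard splitting of $\langle\mathcal{I}'(u),u\rangle$ and $\mathcal{I}(u)$), so the weighted compactness promotes weak to strong convergence along a subsequence and yields a nontrivial critical point. Identifying the mountain-pass level with the Nehari infimum $\inf_{\mathcal{N}} \mathcal{I}$, via the standard unique-fiber-maximum argument $t\mapsto \mathcal{I}(tu)$ on $X\setminus\{0\}$, will certify that the solution produced is a groundstate; positivity can be obtained by working with $\vert u\vert$ from the outset.

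The main obstacle is the weighted compact embedding in the range $1 < p < (N+\alpha)/N$, in which the Hardy--Littlewood--Sobolev exponent falls below $2$ and the ordinary $L^2$-based Sobolev machinery no longer directly controls the Choquard term. Here one must quantify precisely how the polynomial growth of $V$ prescribed by the hypothesis compensates for the deficiency in integrability demanded by the nonlocal term. The sharpness of the exponent $(N+\alpha)/p-N$, signaled by the Pohozaev identity mentioned in the abstract, indicates that this weight must be used essentially optimally, which is what makes the step delicate.
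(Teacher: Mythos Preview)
Your variational setup and the high-level strategy (make the Choquard term weakly continuous via a compact embedding, then run a standard minimax/Nehari argument) match the paper; the paper opts for constrained minimization of the quotient rather than mountain pass, but for a homogeneous nonlinearity these give the same groundstate. For $p \ge \frac{N+\alpha}{N}$ your outline is correct and coincides with the paper's treatment of that range.

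The genuine gap is in the range $p < \frac{N+\alpha}{N}$. You propose to show $X \hookrightarrow L^{q}(\mathbb{R}^N)$ compactly with $q = \frac{2Np}{N+\alpha} < 2$, estimating the tail by H\"older against the weight $|x|^{\gamma}$, $\gamma = \frac{N+\alpha}{p}-N$. But one computes $\frac{\gamma q}{2-q} = N$ exactly, so the dual weight produced by H\"older is $|x|^{-N}$, which is not integrable at infinity; the hypothesis $V/|x|^\gamma \to \infty$ does not rescue this endpoint. In fact the embedding $X \hookrightarrow L^q(\mathbb{R}^N)$ can \emph{fail} under the stated assumptions: with $N=3$, $\alpha=2$, $p=\tfrac{3}{2}$ (so $\gamma=\tfrac{1}{3}$, $q=\tfrac{9}{5}$) and $V(x)=(1+|x|)^{1/3}\log\log(e^e+|x|)$, the function $u(x)=(1+|x|)^{-5/3}\bigl(\log(e+|x|)\bigr)^{-b}$ with $\tfrac{1}{2}<b<\tfrac{5}{9}$ lies in $X$ but not in $L^{9/5}(\mathbb{R}^3)$.

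The paper never tries to reach the unweighted space $L^q$. It proves instead the straightforward compact embedding $X\hookrightarrow L^2\bigl(|x|^{\gamma}\dif x\bigr)$ (the tail estimate $\int_{|x|>R}|x|^\gamma u^2 \le \sup_{|x|>R}\tfrac{|x|^\gamma}{V}\,\|u\|_X^2$ works directly here), and then controls the Choquard term by this weighted norm through the \emph{Stein--Weiss} weighted fractional integral inequality,
\[
\int_{\mathbb{R}^N}\bigl(I_\alpha\ast|u|^p\bigr)|u|^p
=\int_{\mathbb{R}^N}\bigl|I_{\alpha/2}\ast|u|^p\bigr|^2
\le C\Bigl(\int_{\mathbb{R}^N}|x|^{\gamma}\,|u(x)|^{2}\dif x\Bigr)^{p}.
\]
This weighted form of Hardy--Littlewood--Sobolev is the missing ingredient; once it replaces your unweighted HLS-plus-H\"older step, the rest of your plan goes through unchanged.
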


The solutions are groundstates in the sense that they minimize among nontrivial solutions the functional
\[
  J_p(u)=\frac{1}{2}\int_{\mathbb{R}^N}\vert \nabla u\vert ^2+V\vert u \vert^2 -\frac{1}{2p}\int_{\mathbb{R}^N}\bigl(I_\alpha \ast \vert u\vert ^p\bigr)\vert u\vert ^p;
\]
solutions of the Choquard equation \eqref{eq1.1} are formally critical points of the functional
\(J_p\).

A striking feature of Theorem~\ref{thm1.2} is that the condition \(p > \frac{N + \alpha}{N}\) in \eqref{eqInterCritical} can be loosened when \(V\) grows fast enough at infinity. In particular, if \(V (x) = \abs{x}^\beta\) with \(\beta > 0\), one can take \(p > \max\{\frac{N + \alpha}{N + \beta}, 1\}\).
The growth assumption is sharp. Indeed, if \(V (x) = \abs{x}^\beta\)
and  $u\in W^{2,2}_{\mathrm{loc}}(\mathbb{R}^N)\cap H^{1}_{V}(\mathbb{R}^N)$ solves the Choquard equation \eqref{eq1.1} and
then we have the Poho\v{z}aev identity (Theorem~\ref{thm5.1})
\begin{equation*}
\frac{N-2}{2}\int_{\mathbb{R}^N} \vert \nabla u\vert ^2 +\frac{N + \beta}{2}\int_{\mathbb{R}^N} V \vert u \vert^2
=\frac{N+\alpha}{2p}\int_{\mathbb{R}^N} \bigl(I_\alpha \ast \vert u\vert ^p\bigr)\vert u\vert ^p ,
\end{equation*}
provided that the integral on the right-hand side is finite.
This condition cannot be satisfied when \(p\) does not satisfy the assumption of Theorem~\ref{thm1.2}.

Theorem~\ref{thm1.2} can be thought as counterpart for the nonlocal Choquard equation of results for the nonlinear Schr\"odinger equation with a coercive potential \cites{R}.
Radial positive solutions for the Choquard equation \eqref{eq1.1} had already been obtained in the quadratic case \(p = 2\) when the potential \(V\) is radial and radially increasing \cite{CaoWangZou}.

The core of the proof of Theorem~\ref{thm1.2} is to obtain the well-definiteness, the continuity and the compactness properties of the Riesz potential energy term in the definition of the functional \(J_p\).
This is done by combining a suitable Sobolev-type compact weighted embedding theorem together with the weighted estimates for fractional integrals of Stein and Weiss \cites{SW}, which are a weighted counterpart of the more classical Hardy--Littlewood--Sobolev inequality.

\medskip

We now turn on to the question whether the Choquard equation has, under the conditions of Theorem~\ref{thm1.2} more solutions. This is indeed the case and there are infinitely many solutions.

\begin{theorem}\label{thm1.3}
Let $N\geq 1$, \(\alpha \in (0, N)\), $p \in (1, +\infty)$ and \(V \in C (\mathbb{R}^N; [0, +\infty))\). If
\[
  \frac{1}{p} > \frac{N - 2}{N + \alpha}
\]
and if
\[
  \liminf\limits_{\vert x\vert \to+\infty}\frac{V(x)}{1+\vert x\vert ^{\frac{N+\alpha}{p}-N}}=+\infty,
\]
then the Choquard equation \eqref{eq1.1} has an infinite sequence of solutions
whose energies do not remain bounded.
\end{theorem}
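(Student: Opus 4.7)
The plan is to apply Bartsch's fountain theorem to the even functional $J_p$ on the Hilbert space $H^1_V (\mathbb{R}^N) := \{u \in H^1 (\mathbb{R}^N) : \int_{\mathbb{R}^N} V \vert u \vert^2 < +\infty\}$ endowed with the inner product whose square norm is $\Vert u \Vert^2 := \int_{\mathbb{R}^N} (\vert \nabla u \vert^2 + V \vert u \vert^2)$. Pick an orthonormal basis $(e_k)_{k \ge 1}$ of this separable Hilbert space and set $X_k := \mathrm{span}\{e_1, \dots, e_k\}$ and $Y_k := \overline{\mathrm{span}\{e_j : j \ge k\}}$, so that $H^1_V (\mathbb{R}^N) = X_{k-1} \oplus Y_k$. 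The weighted compact embeddings used to prove Theorem~\ref{thm1.2} simultaneously ensure that $J_p \in C^1(H^1_V(\mathbb{R}^N);\mathbb{R})$ and that $J_p$ is even, so the fountain setup is meaningful.

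I would then verify the two geometric conditions. For the first, each $X_k$ is finite-dimensional, so all norms on it are equivalent and there is $c_k > 0$ with $\int_{\mathbb{R}^N}(I_\alpha \ast \vert u \vert^p)\vert u \vert^p \ge c_k \Vert u \Vert^{2p}$ on $X_k$; since $p > 1$, choosing $r_k$ large enough gives $J_p \le 0$ on $\{u \in X_k : \Vert u \Vert = r_k\}$. For the second, I introduce
\[
  \beta_k := \sup \Bigl\{ \Bigl( \int_{\mathbb{R}^N} (I_\alpha \ast \vert u \vert^p)\vert u \vert^p \Bigr)^{1/(2p)} : u \in Y_k,\ \Vert u \Vert = 1 \Bigr\},
\]
and claim $\beta_k \to 0$ as $k \to \infty$: this is the quantitative counterpart of the compactness of the Riesz-potential term on bounded sets of $H^1_V(\mathbb{R}^N)$. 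Granting this, for $u \in Y_k$ with $\Vert u \Vert = \rho_k$,
\[
  J_p (u) \ge \tfrac{1}{2}\rho_k^2 - \tfrac{1}{2p}\beta_k^{2p}\rho_k^{2p},
\]
and the choice $\rho_k := \beta_k^{-p/(p-1)}$ (well defined for $k$ large) yields $\inf_{u \in Y_k,\,\Vert u \Vert = \rho_k} J_p (u) \ge \frac{p-1}{2p}\rho_k^2 \to +\infty$, with $\rho_k < r_k$ for $k$ large after possibly enlarging $r_k$.

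The third ingredient is a Palais--Smale condition at every positive level $c$. Given $(u_n)$ with $J_p (u_n) \to c$ and $J'_p (u_n) \to 0$ in $H^1_V(\mathbb{R}^N)^\ast$, the algebraic identity
\[
  J_p (u_n) - \tfrac{1}{2p}\langle J'_p (u_n), u_n\rangle = \tfrac{p-1}{2p}\Vert u_n \Vert^2
\]
forces $(u_n)$ to be bounded. Passing to a weakly convergent subsequence $u_n \rightharpoonup u$ in $H^1_V(\mathbb{R}^N)$, the weak-to-strong continuity of both the nonlocal nonlinearity and its derivative---inherited from the weighted compact embedding driving Theorem~\ref{thm1.2}---upgrades weak to strong convergence and gives $J'_p (u) = 0$ and $J_p (u) = c$. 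The fountain theorem then produces an unbounded sequence of critical values of $J_p$, i.e.\ a sequence of solutions of \eqref{eq1.1} with unbounded energies.

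The main obstacle I expect is the asymptotic decay $\beta_k \to 0$: the qualitative compactness statement used for Theorem~\ref{thm1.2} must be promoted to a uniform rate on the shrinking subspaces $Y_k$. The cleanest path is to identify, via Stein--Weiss, a compact embedding $H^1_V (\mathbb{R}^N) \hookrightarrow L^q(\mathbb{R}^N, w \dif x)$ (with $q = \frac{2Np}{N+\alpha}$ and an appropriate weight $w$) and then invoke the general Hilbert-space fact that for any compact linear operator $T$ from a separable Hilbert space with orthonormal basis $(e_k)$ into a Banach space, $\sup_{u \in Y_k,\,\Vert u \Vert=1} \Vert T u \Vert \to 0$. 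Everything else is routine, as both the boundedness of Palais--Smale sequences and the strong-convergence argument mirror what is already used in the proof of Theorem~\ref{thm1.2}.
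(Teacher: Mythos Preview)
Your proposal is correct and matches the paper's proof essentially step by step: fountain theorem on $H^1_V(\mathbb{R}^N)$, the same finite-dimensional argument for $J_p\le 0$ on large spheres, the same quantity $\beta_k$ (up to taking a different root) for the coercivity on the tail subspaces, and the same Palais--Smale verification. The ``obstacle'' you anticipate for $\beta_k\to 0$ is in fact no obstacle: the paper simply picks $u_k$ in your $Y_k$ with $\Vert u_k\Vert=1$ nearly realizing $\beta_k$, notes that $u_k\rightharpoonup 0$ by construction of the tail subspaces, and applies directly the weak-to-strong continuity of $u\mapsto I_{\alpha/2}\ast\vert u\vert^p$---precisely the property you already invoke for Palais--Smale---so no detour through a linear compact embedding is needed.
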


The solutions are constructed with the fountain theorem \cite{B} (see also \cite{W}*{Theorem 3.6}); thanks to the same weighted embedding and fractional integral estimates as in the proof of Theorem~\ref{thm1.2}, the Palais--Smale condition for the functional $J_p$ can be established by classical arguments.

\medskip

Finally we investigate the question whether the Choquard equation \eqref{eq1.1} has a least energy sign-changing solution, that is, a solution that changes sign and which minimizes the functional \(J_p\) among such solutions.

A natural way to construct such solutions is to minimize, as for the local semilinear elliptic problems \cites{CSS,CCN,LW,SW2}, the functional on the \emph{Nehari nodal set}:
\[
 \bigl\{ u \in H^1 (\mathbb{R}^N) \st u^+ \ne 0, \;u^- \ne 0,\; \langle J_p'(u),u^{+}\rangle= 0\text{ and }\langle J_p'(u),u^{-}\rangle= 0 \bigr\},
\]
where \(u^+ = \max (u, 0)\) and \(u^- = \min (u, 0)\).
Such solutions of \eqref{eq1.1} have been constructed when \(V = 1\)
\[
\frac{N - 2}{N + \alpha} <  \frac{1}{p} \le \frac{1}{2};
\]
they were obtained by a new minimax principle and concentration-compactness method, and the minimization problem on the Nehari nodal set was observed to be degenerate when \(p < 2\) \cites{GMV,GV}.

Sign-changing solutions have been constructed for the Schr\"{o}dinger-Poisson system in $\mathbb{R}^3$ in which a nonlocal nonlinearity appears with opposite sign \cites{WZ,AS}.

Now, we are in a position to state our main results on the nodal solutions of equation \eqref{eq1.1}:

\begin{theorem}\label{thm1.5}
Let $N\geq 1$, \(\alpha \in ((N - 4)_+, N)\), $p \in [2, +\infty)$ and \(V \in C (\mathbb{R}^N; [0, +\infty))\). If
\[
  \frac{1}{p} > \frac{N - 2}{N + \alpha}
\]
and if
\[
  \liminf\limits_{\vert x\vert \to +\infty} V(x)=+\infty,
\]
then the Choquard equation \eqref{eq1.1} has at least one least-energy sign-changing solution.%
\end{theorem}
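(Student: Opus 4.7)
The plan is to adapt the Nehari nodal set method to the Choquard setting, exploiting the compactness afforded by the coercive potential $V$. I would work in the weighted Sobolev space $H^1_V(\mathbb{R}^N)=\{u\in H^1(\mathbb{R}^N)\st\int_{\mathbb{R}^N}V\abs{u}^2<\infty\}$ with norm $\|u\|_V^2=\int_{\mathbb{R}^N}(\abs{\nabla u}^2+V\abs{u}^2)$. Since $V(x)\to+\infty$, the embedding $H^1_V\hookrightarrow L^q(\mathbb{R}^N)$ is compact for every $q\in[2,2^*)$; combined with the weighted Hardy--Littlewood--Sobolev estimate already used for Theorem~\ref{thm1.2}, this renders the nonlocal functional $u\mapsto\int_{\mathbb{R}^N}(I_\alpha\ast\abs{u}^p)\abs{u}^p$ weakly sequentially continuous on $H^1_V$. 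Critical points of $J_p$ are then sought by minimizing on the Nehari nodal set
\[
  \mathcal{M}=\bigl\{u\in H^1_V(\mathbb{R}^N)\st u^+\ne 0,\;u^-\ne 0,\;\langle J_p'(u),u^+\rangle=0=\langle J_p'(u),u^-\rangle\bigr\},
\]
with $c=\inf_{\mathcal{M}}J_p$.

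The first key step is the projection claim: for every $u\in H^1_V$ with $u^\pm\ne 0$ there exist unique $s_u,t_u>0$ such that $s_uu^++t_uu^-\in\mathcal{M}$, and $\varphi_u(s,t):=J_p(su^++tu^-)$ attains its strict global maximum on $[0,+\infty)^2$ at $(s_u,t_u)$. Since $u^+$ and $u^-$ have essentially disjoint supports, the Nehari system reduces to
\[
  \|u^+\|_V^2=s^{2p-2}A_u+s^{p-2}t^pB_u,\qquad \|u^-\|_V^2=t^{2p-2}C_u+s^pt^{p-2}B_u,
\]
where $A_u,C_u>0$ and $B_u=\int_{\mathbb{R}^N}(I_\alpha\ast\abs{u^+}^p)\abs{u^-}^p\ge 0$. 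The hypothesis $p\ge 2$ makes each right-hand side nondecreasing in both variables and strictly increasing in its diagonal one, so a monotonicity/continuity argument yields unique solvability together with strict maximality. Inserting the Hardy--Littlewood--Sobolev inequality into these identities produces a uniform lower bound $\|u^\pm\|_V\ge\delta>0$ on $\mathcal{M}$, whence $c>0$.

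For a minimizing sequence $(u_n)\subset\mathcal{M}$ the identity $J_p(u_n)=(\tfrac12-\tfrac{1}{2p})\|u_n\|_V^2$ (valid on $\mathcal{M}$) gives boundedness in $H^1_V$; along a subsequence $u_n\rightharpoonup u$ weakly in $H^1_V$ and $u_n^\pm\to u^\pm$ strongly in every subcritical $L^q$. The lower bound on $\|u_n^\pm\|_V$ transfers through the Nehari identity to exclude $u^\pm=0$, so the projection step yields $\tilde u=s_uu^++t_uu^-\in\mathcal{M}$. Applying the maximum property to $u_n\in\mathcal{M}$ gives $\varphi_{u_n}(s_u,t_u)\le\varphi_{u_n}(1,1)=J_p(u_n)$; combining this with weak lower semicontinuity of $\|\cdot\|_V^2$ and the continuity of the Riesz term, one obtains
\[
  c\le J_p(\tilde u)=\varphi_u(s_u,t_u)\le\liminf_{n\to\infty}\varphi_{u_n}(s_u,t_u)\le\liminf_{n\to\infty}J_p(u_n)=c,
\]
so $\tilde u$ realizes the infimum $c$.

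The remaining and most delicate step, which I expect to be the main obstacle, is to show that this minimizer $\tilde u$ is a critical point of $J_p$. I would argue by contradiction via a quantitative deformation lemma: if $J_p'(\tilde u)\ne 0$, a pseudo-gradient flow on $H^1_V$ localized near $\tilde u$ strictly decreases $J_p$ while only mildly perturbing $\tilde u^\pm$, and applying the projection of the first step to the flowed functions together with the strict maximum property of $\varphi_{\tilde u}$ at $(1,1)$ produces an element of $\mathcal{M}$ with energy strictly below $c$, a contradiction. The technical heart is the continuity of the projection $v\mapsto(s_v,t_v)$ in a neighbourhood of $\tilde u$ and the control of the sign-change structure along the flow; this is precisely where the assumption $p\ge 2$ is indispensable, echoing the known degeneracy of the nodal Nehari problem for $p<2$. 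Since $\tilde u$ changes sign by construction, it is the sought least-energy nodal solution.
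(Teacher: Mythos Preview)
Your outline is essentially the paper's argument for the strict case $p>2$: Lemma~4.1 in the paper is exactly your projection claim (proved there via the substitution $(s,t)\mapsto(s^{1/p},t^{1/p})$ and strict concavity), the minimizing-sequence argument is the same, and the paper's degree-theoretic perturbation step plays the same role as your deformation argument.

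The gap is at the endpoint $p=2$, where both of your key ingredients break down. First, your projection claim fails: when $p=2$ the Nehari system becomes the \emph{linear} system
\[
\|u^+\|_V^2 = s^2 A_u + t^2 B_u,\qquad \|u^-\|_V^2 = t^2 C_u + s^2 B_u,
\]
whose unique solution $(s^2,t^2)=\bigl(\tfrac{C_u\|u^+\|_V^2-B_u\|u^-\|_V^2}{A_uC_u-B_u^2},\tfrac{A_u\|u^-\|_V^2-B_u\|u^+\|_V^2}{A_uC_u-B_u^2}\bigr)$ need not be positive; equivalently, the concave function $\Phi_2(\sigma,\tau)=J_2(\sqrt{\sigma}\,u^++\sqrt{\tau}\,u^-)$ may achieve its maximum on the boundary of $[0,\infty)^2$ (the derivative $\partial_\sigma\Phi_2|_{\sigma=0}$ is finite, unlike the $p>2$ case). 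So for a generic sign-changing $u$ there is no projection onto $\mathcal{M}_2$. Second, your uniform lower bound $\|u^\pm\|_V\ge\delta$ on $\mathcal{M}$ also fails: when $p=2$ the Hardy--Littlewood--Sobolev estimate applied to the constraint $\|u^+\|_V^2=\int(I_\alpha\ast|u|^2)|u^+|^2$ yields only $C_1\|u^+\|_{L^{4N/(N+\alpha)}}^2\le C_2\|u\|_{L^{4N/(N+\alpha)}}^2\|u^+\|_{L^{4N/(N+\alpha)}}^2$, which bounds $\|u\|$ from below but says nothing about $\|u^+\|$ individually. Without this, you cannot exclude $u^+=0$ or $u^-=0$ for the weak limit.

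The paper addresses $p=2$ by an entirely different route: it takes the least-energy nodal solutions $u_{p_n}$ for a sequence $p_n\searrow 2$, shows they converge strongly in $H^1_V$ to a nontrivial solution $u$ of the quadratic equation, and then---this is the crux---proves $u$ changes sign by contradiction. If, say, $u\ge 0$, then $u$ is a nonnegative solution, hence by regularity and the strong maximum principle $u>0$ everywhere; but a suitable renormalization $v_{p_n}=u_{p_n}^-/\|u_{p_n}^-\|_V^{2/p_n}$ converges to some $v$ with $uv=0$ a.e.\ and $\int(I_\alpha\ast|u|^2)|v|^2=1$, forcing $v\ne 0$, a contradiction. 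This use of the maximum principle is what your scheme is missing, and it is also where the restriction $\alpha>(N-4)_+$ (needed for the regularity step) enters.
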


As before, the assumptions provide us with a functional with nice compactness properties. The situation is still more challenging than for a local semilinear elliptic equation on a bounded domain because some of the usual properties of the local nonlinear Schr\"odinger functional on negative and positive parts fail: in general \(J_p(u)\ne J_p (u^+)+J_p(u^-)\), and \(\langle J_p'(u),u^{\pm}\rangle\ne\langle J_p'(u^{\pm}),u^{\pm}\rangle\).

Theorem~\ref{thm1.5} was stated by Ye \cite{Y}*{Theorem 1.3}; it seems that his argument unfortunately overlooks the crucial question whether the proposed solution \(u\) does change sign, which is quite delicate when \(p = 2\) (see the proof of Theorem~\ref{thm1.5} and \cite{GMV}). We propose here a proof relying on tools similar to those for Theorems~\ref{thm1.2} and \ref{thm1.3}.

When \(p < 2\), we prove that the energy functional does not achieve its minimum on the Nehari nodal set (see Proposition~\ref{prop4}).

\medskip

The remainder of this paper is organized as follows.  In section \ref{sectionEmbedding},
we first prove a weighted embedding theorem, then show that the function $J_p$ is of $C^1$ on the natural Sobolev space $H_{V}^{1}(\mathbb{R}^N)$ and satisfies the Palais--Smale condition. The proof of our main results will be postponed to the next two sections \ref{sectionGroundstates} and \ref{sectionNodal}. In the last section \ref{sectionPohozaev}, we will establish Poho\v{z}aev identity responding to equation \eqref{eq1.1}, with which we can deduce some nonexistence results.

\section{Function spaces and weighted embedding theorems}
\label{sectionEmbedding}
The linear part of the Choquard equation \eqref{eq1.1} naturally induces the Euclidean norm
\[
\Vert u\Vert _{V}:=\Big(\int_{\mathbb{R}^N}\vert \nabla u\vert ^2+V\vert u \vert^2 \Big)^{\frac{1}{2}}.
\]
We define $H^{1}_{V}(\mathbb{R}^N)$ as the Hilbert space obtained by
completion of the set of smooth test functions $C^{\infty}_{c}(\mathbb{R}^N)$ with respect to the norm $\Vert \cdot\Vert _V$. We first establish some embedding theorem from $H^{1}_{V}(\mathbb{R}^N)$ into the weighted spaces $L^2(\vert x\vert ^\gamma\dif x;\mathbb{R}^N)$ which is defined for $\gamma\geq0$ by
\begin{equation*}
L^2 \bigl(\vert x\vert ^\gamma\dif x;\mathbb{R}^N\bigr):=\Bigl\{u : \mathbb{R}^N \to \mathbb{R}\st u \text{ is measurable  and } \int_{\mathbb{R}^N}\vert x\vert ^\gamma \vert u(x)\vert ^2 \dif x<+\infty\Bigr\}.
\end{equation*}

We begin by establishing the following embedding theorem.

\begin{proposition}\label{embedding}
Let \(N \ge 1\) and \(\gamma \in [0, +\infty)\). If \(V \in C (\mathbb{R}^N; [0, +\infty))\) satisfies
\[
  \liminf\limits_{\vert x\vert \to+\infty} \frac{V(x)}{\vert x \vert^\gamma}>0,
\]
then there exists a constant \(C > 0\) such that for every \(u \in H^1_V (\mathbb{R}^N)\),
\[
  \int_{\mathbb{R}^N} \vert x\vert^\gamma \vert u (x) \vert^2 \dif x
  \le C \int_{\mathbb{R}^N} \vert \nabla u\vert^2 + V \vert u \vert^2.
\]
If moreover,
\[
  \lim_{\vert x\vert \to+\infty} \frac{V(x)}{\vert x \vert^\gamma} = +\infty,
\]
then the corresponding embedding is compact. In particular, the embedding $H_{V}^{1}(\mathbb{R}^N)\hookrightarrow L^q(\mathbb{R}^N)$ is compact for any
$q$ with $\frac{1}{q}\in(\frac{1}{2}-\frac{1}{N},\frac{1}{2})$ if $\gamma=0$.
\end{proposition}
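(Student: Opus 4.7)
The plan is to prove Proposition~\ref{embedding} in three main steps: the continuous embedding, the compactness under the stronger hypothesis, and the \(L^q\) consequence when \(\gamma=0\).

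For the continuous embedding, I extract from the \(\liminf\) assumption constants \(R_0>0\) and \(c>0\) with \(V(x)\ge c\vert x\vert^\gamma\) for \(\vert x\vert\ge R_0\), and split the target integral. On \(\{\vert x\vert\ge R_0\}\) one bounds directly \(\int_{\vert x\vert\ge R_0}\vert x\vert^\gamma\vert u\vert^2\le (1/c)\int V\vert u\vert^2\le (1/c)\Vert u\Vert_V^2\). On \(B_{R_0}\) the weight is bounded by \(R_0^\gamma\), so the task reduces to controlling \(\Vert u\Vert_{L^2(B_{R_0})}\) by \(\Vert u\Vert_V\). The delicate point is that \(V\) may vanish on \(B_{R_0}\), so \(\vert u\vert^2\) cannot be absorbed into \(V\vert u\vert^2\) there. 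To bridge this I choose the annulus \(A=B_{2R_0}\setminus B_{R_0}\), on which \(V\ge cR_0^\gamma>0\), so that \(\Vert u\Vert_{L^2(A)}^2\le (cR_0^\gamma)^{-1}\Vert u\Vert_V^2\); the Poincaré inequality on \(B_{2R_0}\) with the mean \(\bar u_A=\vert A\vert^{-1}\int_A u\) gives \(\Vert u-\bar u_A\Vert_{L^2(B_{2R_0})}\le C\Vert\nabla u\Vert_{L^2(B_{2R_0})}\), while \(\vert\bar u_A\vert\le \vert A\vert^{-1/2}\Vert u\Vert_{L^2(A)}\). Combining these gives \(\Vert u\Vert_{L^2(B_{2R_0})}\le C\Vert u\Vert_V\) for \(u\in C^\infty_c(\mathbb{R}^N)\), which then extends to \(H^1_V(\mathbb{R}^N)\) by density.

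For the compactness, given a bounded sequence \((u_n)\) in \(H^1_V\) and \(\varepsilon>0\), the stronger hypothesis supplies \(R=R(\varepsilon)\) with \(V\ge \varepsilon^{-1}\vert x\vert^\gamma\) on \(\{\vert x\vert\ge R\}\), so that \(\int_{\vert x\vert\ge R}\vert x\vert^\gamma\vert u_n\vert^2\le \varepsilon\Vert u_n\Vert_V^2\) is uniformly small. On \(B_R\), Step~1 gives a uniform \(H^1(B_R)\) bound on \((u_n)\); the Rellich--Kondrachov theorem produces a subsequence converging in \(L^2(B_R)\), hence in \(L^2(\vert x\vert^\gamma\dif x;B_R)\) since the weight is bounded there. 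A diagonal extraction over \(\varepsilon=1/k\) then yields a single subsequence that is Cauchy, and therefore convergent, in \(L^2(\vert x\vert^\gamma\dif x;\mathbb{R}^N)\).

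For the \(L^q\) statement with \(\gamma=0\), the first two steps yield \(H^1_V\hookrightarrow L^2(\mathbb{R}^N)\) compactly, while the Sobolev inequality gives \(\Vert u\Vert_{L^{2^*}}\le C\Vert\nabla u\Vert_{L^2}\le C\Vert u\Vert_V\) when \(N\ge 3\) (and \(\Vert u\Vert_{L^Q}\le C\Vert u\Vert_V\) for any finite \(Q\) when \(N\le 2\), via \(H^1\hookrightarrow L^Q\) combined with the previously established \(L^2\) bound). The interpolation inequality \(\Vert u\Vert_{L^q}\le \Vert u\Vert_{L^2}^{1-\theta}\Vert u\Vert_{L^{2^*}}^{\theta}\) with the matching \(\theta\in(0,1)\) then upgrades \(L^2\)-convergence to \(L^q\)-convergence whenever \(1/q\in(1/2-1/N,1/2)\). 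The main obstacle I anticipate is precisely the local \(L^2\) bound in Step~1: because \(V\) is only nonnegative and continuous, it can vanish on compact sets and \(\Vert\cdot\Vert_V\) does not pointwise dominate \(\Vert\cdot\Vert_{L^2_{\mathrm{loc}}}\); the annulus-averaged Poincaré argument is the natural remedy, transporting \(L^2\)-mass from a region where \(V\) is bounded below into the potential well. The remaining pieces are routine tail-plus-Rellich compactness and interpolation.
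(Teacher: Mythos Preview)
Your proof is correct and follows essentially the same architecture as the paper's: split near and far, control the far part by \(V\), handle the near part via a Poincar\'e-type argument that borrows \(L^2\)-mass from an annulus where \(V\) is bounded below, then tail-plus-Rellich for compactness and interpolation for the \(L^q\) statement. The only cosmetic difference is that the paper implements the local step with a cutoff \(\varphi\) and the Dirichlet Poincar\'e inequality on \(B(0,\kappa)\) applied to \(\varphi u\), whereas you use the mean-over-annulus Poincar\'e inequality on \(B_{2R_0}\); both devices serve the same purpose and are interchangeable.
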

\begin{proof}
Given \(\lambda \in (0, +\infty)\) such that
\[
  \lambda < \liminf\limits_{\vert x\vert \to+\infty}\frac{V(x)}{\vert x \vert^\gamma},
\]
there exists \(\kappa > 0\) sufficiently large so that if \(x \in \mathbb{R}^N \setminus B (0,  \frac{\kappa}{2} )\), we have
\(V(x)\geq \lambda \vert x \vert^\gamma \).
(Here and in the sequel, we use the notation  $B(a,r)$ for the ball centered at \(a\) of radius $r$  and in $\mathbb{R}^N$.)
By integration, we have in particular,
\begin{equation}
\label{ineqLambdaOutside}
\lambda \int_{\mathbb{R}^N \setminus B (0,\frac{\kappa}{2} )}\vert x \vert^\gamma \vert u (x) \vert^2 \dif x\leq \int_{\mathbb{R}^N \setminus B (0, \frac{\kappa}{2} )}V\vert u \vert^2.
\end{equation}
We take a function $\varphi\in C^{\infty}(\mathbb{R}^N)$
such that $0\leq \varphi\leq1$ in \(\mathbb{R}^N\), $\varphi(x)=1$ for every $x\in B(0,\frac{\kappa}{2})$ and $\varphi (x)= 0$ for every $x \in \mathbb{R}^N \setminus B (0,  \kappa)$.
Then, it follows that
\begin{equation*}
\begin{split}
\int_{\mathbb{R}^N} &\vert x \vert^\gamma \vert u (x) \vert^2\dif x \le \kappa^\gamma \int_{B (0, \kappa)} \varphi^2\vert u \vert^2+\int_{\mathbb{R}^N \setminus B (0, \frac{\kappa}{2})} \vert x \vert^\gamma \vert u (x) \vert^2 \dif x\\
&\leq C_1\kappa^\gamma   \int_{B (0, \kappa)} \vert \nabla(\varphi u)\vert ^2+ \int_{\mathbb{R}^N \setminus B (0, \frac{\kappa}{2} )} \vert x \vert^\gamma \vert u (x) \vert^2 \dif x\\
&\leq 2 C_1\kappa^\gamma \int_{B (0,  \kappa)}\vert \nabla u\vert ^2+2 C_1\kappa^\gamma \int_{B (0, \kappa) \setminus B (0, \frac{\kappa}{2})}\vert \nabla \varphi\vert ^2\vert u \vert^2
+ \int_{\mathbb{R}^N \setminus B (0, \frac{\kappa}{2} )} \vert x \vert^\gamma \vert u (x) \vert^2\dif x\\
&\leq 2 C_1\kappa^\gamma \int_{\mathbb{R}^N}\vert \nabla u\vert ^2+ \biggl(2 C_1\kappa^\gamma \frac{\Vert \nabla \varphi\Vert_{L^\infty (\mathbb{R}^N)}^2}{(\kappa/2)^\gamma} + 1\biggr)
 \int_{\mathbb{R}^N \setminus B (0, \frac{\kappa}{2} )} \vert x \vert^{\gamma} \vert u (x) \vert^2\dif x.
\end{split}
\end{equation*}
where the constant $C_1$ comes from the Poincar\'{e} inequality with Dirichlet boundary conditions on the ball \(B (0,   \kappa)\),
 which is independent of the function $u$.
We now apply the estimate \eqref{ineqLambdaOutside} to the second term to obtain
\begin{equation}
\begin{split}
\label{eq2.3}
\int_{\mathbb{R}^N} \vert x \vert^\gamma \vert u (x) \vert^2\dif x & \leq  2 C_1 \kappa^\gamma \int_{\mathbb{R}^N}\vert \nabla u\vert ^2+
 \frac{2^{\gamma + 1} C_1\Vert \nabla \varphi\Vert_{L^\infty (\mathbb{R}^N)}^2+ 1}{\lambda} \int_{\mathbb{R}^N}V\vert u \vert^2\\
& \le \max \,\biggl\{2 C_1 \kappa^\gamma , \frac{2^{\gamma + 1} C_1\Vert \nabla \varphi\Vert_{L^\infty (\mathbb{R}^N)}^2+ 1}{\lambda}\biggr\}
\int_{\mathbb{R}^N} \vert \nabla u\vert^2 + V \vert u \vert^2,
\end{split}
\end{equation}
and the first part of the conclusion follows.

For the compactness, without loss of generality, let $(v_n)_{n \in \mathbb{N}}\) be a sequence such that $v_n\rightharpoonup 0$ weakly as \(n \to \infty\) in $H^{1}_{V}(\mathbb{R}^N)$. In particular, the sequence $(v_n)_{n \in \mathbb{N}}$ is bounded in \(H^1_V (\mathbb{R}^N)\).
We are going to prove that $v_n\to 0$ strongly as \(n \to \infty\) in $L^{2}(\vert x\vert ^{\gamma}\dif x; \mathbb{R}^N)$.
By assumption, for every $\varepsilon>0$, there exists $R_1>0$, such that
\[
\bigg(\sup_{\vert x\vert \geq R_1}\frac{\vert x\vert ^\gamma}{V(x)}\bigg)\Vert v_n\Vert _{V}^2\leq \varepsilon.
\]
Since $\gamma\geq0$, for any fixed $R>0$, the weighted space $L^{2}(\vert x\vert ^{\gamma}\dif x;B(0,R))$ is embedded into the classical Lebesgue space $L^2(B(0,R))$ defined on bounded domain $B(0,R)$. By the classical Sobolev embedding theorem, $v_n\to 0$ strongly in $L^2(B(0,R))$ as \(n \to \infty\). Therefore, for fixed $R\geq R_1$, there exists $N_1>0$ such that
\[
\int_{B(0,R)} \vert x\vert ^\gamma \vert v_{n}(x)\vert ^2\dif x\leq \varepsilon \qquad \text{ for each } n\geq N_1.
\]
Then for $n\geq N_1$, we have
\begin{equation*}
\begin{split}
 \int_{\mathbb{R}^N} \vert x\vert ^\gamma \vert v_{n}(x)\vert ^2\dif x&=\int_{B(0,R)} \vert x\vert ^\gamma \vert v_{n}(x)\vert ^2\dif x+\int_{\mathbb{R}^N\backslash{B(0,R)}} \vert x\vert ^\gamma \vert v_n (x)\vert^2 \dif x \nonumber\\
  &\leq \int_{B(0,R)} \vert x\vert ^\gamma \vert v_{n}(x)\vert ^2 \dif x+\bigg(\sup_{\vert x\vert \geq R}\frac{\vert x\vert ^\gamma}{V(x)}\bigg)\int_{\mathbb{R}^N\backslash{B(0,R)}} V(x) \vert v_n (x)\vert^2 \dif x \nonumber\\
  &\leq \varepsilon+\bigg(\sup_{\vert x\vert \geq R}\frac{\vert x\vert ^\gamma}{V(x)}\bigg)\Vert v_n\Vert _{V}^2\leq 2\varepsilon.\nonumber
\end{split}
\end{equation*}

Finally, we interpolate to conclude our proof of the compact embedding $H^{1}_{V} \subset L^q(\mathbb{R}^N)$ with
$\frac{1}{2}-\frac{1}{N}<\frac{1}{q}<\frac{1}{2}$ for the case $\gamma=0$.  Take $\bar{q}=\frac{2N}{N-2}$ if $N\geq 3$, or any $\bar{q}\in(q, +\infty)$ if $N=1,2$, there exists $b\in(0,1)$ such that
\[
\frac{1}{q}=\frac{b}{2}+\frac{1-b}{\bar{q}},
\]
it follows that as $n\to\infty$,
\begin{equation*}
\Vert v_n\Vert _{L^{q}}
\leq \Vert v_n\Vert ^{b}_{L^2}\Vert v_n\Vert ^{1-b}_{L^{\bar{q}}}
\leq C^{(1-b)}  \Vert v_n\Vert ^{b}_{L^2}\Vert v_n\Vert _{V}^{1-b}\to 0.\qedhere
\end{equation*}
\end{proof}

With the aid of the Stein--Weiss inequality \cites{SW}, we show that the nonlocal Riesz potential energy term $\mathcal{G}_p$ of the functional $J_p$ is well-defined
and prove that the functional $J_p$ is of class $C^1$ on the weighted Sobolev space $H_{V}^{1}(\mathbb{R}^N)$. Finally, thanks to the compact embedding result, we close this section by
verifying that the functional $J_p$ satisfies the Palais--Smale condition.

\begin{proposition}\label{lem2.2}
Let $N\geq 1$ and \(\alpha \in (0, N)\). If $V \in C (\mathbb{R}^N; [0, +\infty))$ satisfies
\[
\liminf\limits_{\vert x\vert \to +\infty}\frac{V(x)}{1+\vert x\vert ^{\frac{N+\alpha}{p}-N}}>0,
\]
then the mappings $u\in H^{1}_{V}(\mathbb{R}^N)\longmapsto I_{\alpha/2}*\vert u\vert ^p\in L^2(\mathbb{R}^N)$ and
\[
u\in H^{1}_{V}(\mathbb{R}^N)\longmapsto \bigl(I_\alpha \ast \vert u\vert ^p\bigr)\vert u\vert ^{p-2}u\in (H_{V}^{1}(\mathbb{R}^N))'
\]
are continuous for $p>1$ and $\frac{1}{p}>\frac{N-2}{N+\alpha}$.\\
If moreover
\[
\liminf\limits_{\vert x\vert \to +\infty}\frac{V(x)}{1+\vert x\vert ^{\frac{N+\alpha}{p}-N}}=+\infty,
\]
the above mappings are weak to strong type, that is, they map weakly converging sequence to strongly converging sequence.
\end{proposition}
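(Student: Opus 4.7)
The plan is to reduce everything to an $L^2$-bound on $I_{\alpha/2} \ast \abs{u}^p$, and then to invoke the Stein--Weiss inequality together with the weighted embedding of Proposition~\ref{embedding}. Writing $\gamma := \frac{N+\alpha}{p} - N$, I would distinguish the regime $\gamma > 0$ (i.e.\ $p < \frac{N+\alpha}{N}$) from the regime $\gamma \le 0$ (i.e.\ $p \ge \frac{N+\alpha}{N}$), to be treated respectively by Stein--Weiss and by the plain Hardy--Littlewood--Sobolev inequality.

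\medskip

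First I would show that $u \longmapsto I_{\alpha/2} \ast \abs{u}^p$ maps $H^1_V(\mathbb{R}^N)$ boundedly into $L^2(\mathbb{R}^N)$. When $\gamma > 0$, Stein--Weiss with the (forced) choice $s_1 = \frac{N+\alpha-Np}{2}$ and $p_0 = \frac{2}{p}$ gives
\[
  \Vert I_{\alpha/2} \ast \abs{u}^p \Vert_{L^2} \le C \bigl\Vert \abs{x}^{s_1}\abs{u}^p \bigr\Vert_{L^{p_0}} = C \Bigl(\int_{\mathbb{R}^N} \abs{x}^\gamma \abs{u}^2 \dif x \Bigr)^{p/2} \le C' \Vert u\Vert_V^p,
\]
the last step by Proposition~\ref{embedding}; the Stein--Weiss admissibility conditions $0 \le s_1 < N/p_0'$ reduce exactly to $p \le \frac{N+\alpha}{N}$ and $\alpha < N$. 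When $\gamma \le 0$, the hypothesis yields $\liminf V > 0$ so that $H^1_V \hookrightarrow L^2$ by Proposition~\ref{embedding} (with exponent zero); interpolating with the Sobolev embedding into $L^{2^\ast}$ produces $H^1_V \hookrightarrow L^r$ for $r := \frac{2Np}{N+\alpha} \in [2, 2^\ast)$, the upper endpoint being strict precisely because $\frac{1}{p} > \frac{N-2}{N+\alpha}$, and standard Hardy--Littlewood--Sobolev yields the desired bound.

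\medskip

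For the second map, I would bound the duality pairing against an arbitrary $v \in H^1_V$ by Cauchy--Schwarz applied to the positive definite bilinear form $B(f,g) := \iint_{\mathbb{R}^N \times \mathbb{R}^N} f(x)g(y)\abs{x-y}^{\alpha-N}\dif x\,\dif y$:
\[
  \Bigl| \int_{\mathbb{R}^N}(I_\alpha \ast \abs{u}^p)\abs{u}^{p-2}u\,v \Bigr| \le B(\abs{u}^p,\abs{u}^p)^{1/2}\, B(\abs{u}^{p-1}\abs{v},\abs{u}^{p-1}\abs{v})^{1/2}.
\]
The first factor is controlled by $\Vert u\Vert_V^p$ as above. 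For the second, the very same Stein--Weiss exponents apply to $\abs{u}^{p-1}\abs{v}$, and a H\"older inequality with conjugate exponents $\frac{p}{p-1}$ and $p$ redistributes the weight $\abs{x}^\gamma$ between $\abs{u}^{p-1}$ and $\abs{v}$, producing $\Vert u\Vert_V^{p-1}\Vert v\Vert_V$ (and analogously, via classical H\"older, in the case $\gamma \le 0$). The operator norm estimate $C\Vert u\Vert_V^{2p-1}$ follows.

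\medskip

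Continuity of both maps under strong convergence then follows from these uniform bounds together with a.e.\ pointwise convergence along subsequences and Vitali's theorem, the nonlinearities $\abs{\cdot}^p$ and $\abs{\cdot}^{p-2}\cdot$ being continuous. For the weak-to-strong statement, the hypothesis $\liminf V(x)/(1+\abs{x}^\gamma) = +\infty$ promotes (by Proposition~\ref{embedding}) the relevant embedding from bounded to compact, so a weakly converging sequence in $H^1_V$ becomes strongly converging in $L^2(\abs{x}^\gamma \dif x)$ (or in the appropriate classical $L^r$ if $\gamma \le 0$), and one then passes through the nonlinear compositions as above. The only delicate point throughout is the calibration of the Stein--Weiss exponents so that the right-hand side weight is exactly $\abs{x}^\gamma$, matched to the growth rate of $V$; the admissibility constraints are then exactly consistent with the parameter ranges stipulated in the proposition.
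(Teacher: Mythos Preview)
Your proposal is correct and follows essentially the same strategy as the paper: the same case split at $p = \frac{N+\alpha}{N}$, Stein--Weiss combined with the weighted embedding of Proposition~\ref{embedding} when $\gamma > 0$, classical Hardy--Littlewood--Sobolev plus Sobolev when $\gamma \le 0$, and the compact part of Proposition~\ref{embedding} for the weak-to-strong upgrade. The one point of departure is the second map: you bound the dual pairing by Cauchy--Schwarz on the positive Riesz bilinear form and then H\"older, whereas the paper factors the map explicitly through Nemytskii and linear Stein--Weiss operators, locating $I_\alpha \ast \abs{u}^p$ in $L^{2/(2-p)}\bigl(\abs{x}^{-\frac{N+\alpha-pN}{2-p}}\dif x\bigr)$ and $\abs{u}^{p-2}u$ in $L^{2/(p-1)}\bigl(\abs{x}^{\gamma}\dif x\bigr)$ and pairing by H\"older into $\bigl(L^2(\abs{x}^\gamma\dif x)\bigr)' \subset (H^1_V)'$. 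Your route is slightly slicker for the operator-norm estimate; the paper's explicit factorisation has the advantage that continuity (and the weak-to-strong conclusion) is then an immediate composition of continuous maps, while in your write-up this step is compressed into a reference to Vitali that would, when unpacked, amount to the same Nemytskii-operator continuity the paper invokes.
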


Here and in the sequel, $X'$ denotes the topological dual space of the normed space $X$.

\begin{proof}[Proof of Proposition~\ref{lem2.2}]
In the case \(p > \frac{N + \alpha}{N}\), the well-definiteness and the continuity follow from the continuous embedding \(H^1_V (\mathbb{R}^N) \subset H^1 (\mathbb{R}^N)\), the classical Sobolev embedding and the Hardy--Littlewood--Sobolev inequality as in the case where \(V\) is constant \cite{MorozVanSchaftingen2013JFA}.
If moreover \(\liminf_{\vert x \vert \to +\infty} V (x) = +\infty\), the embedding \(H^1_V (\mathbb{R}^N) \subset L^q (\mathbb{R}^N)\) is compact
for every \(q \in [2, +\infty)\) with \(\frac{1}{q} > \frac{1}{2} - \frac{1}{N}\) by Proposition~\ref{embedding}, and then the weak to strong continuity property follows.

We assume now that \(p \leq \frac{N+\alpha}{N}<2\).
We first show that the nonlocal term $\mathcal{G}_p$ of the functional $J_p$ is well defined on the space $H^{1}_{V}(\mathbb{R}^N)$.
By the Stein--Weiss inequality \cite{SW}, together with the semi-group identity for the Riesz potential
$I_\alpha=I_{\alpha/2}*I_{\alpha/2}$ \cites{LL}, we have, since \(\frac{2}{p} > 1\),
\begin{equation*}
\mathcal{G}_p(u):=\int_{\mathbb{R}^N}\bigl(I_\alpha \ast \vert u\vert ^p\bigr)\vert u\vert ^p
=\int_{\mathbb{R}^N} \vert I_\frac{\alpha}{2}*\vert u\vert ^p\vert ^2\leq C\Big( \int_{\mathbb{R}^N} \vert x\vert ^{\frac{N + \alpha}{p} - N}\vert u(x)\vert ^2\dif x \Big)^{p}.
\end{equation*}
In view of the above continuous embedding Proposition~\ref{embedding}, the functional $\mathcal{G}_p$ is well defined on $H^{1}_{V}(\mathbb{R}^N)$.
By Proposition~\ref{embedding} again, the superposition operator
\begin{equation}
\label{eqSuperposition}
u \in L^{2}\bigl(\vert x\vert ^{\frac{N + \alpha}{p} - N}\dif x;\mathbb{R}^N\bigr)\longmapsto \vert u \vert^p \in L^{\frac{2}{p}}\bigl(\vert x\vert ^{\frac{N + \alpha}{p} - N}\dif x;\mathbb{R}^N\bigr)
\end{equation}
is continuous. Taking into account the Stein--Weiss inequality \cites{SW} again, the Riesz potential integral operator
\begin{equation}
\label{eqRieszMap}
f\in L^{\frac{2}{p}}\bigl(\vert x\vert ^{\frac{N + \alpha}{p} - N}\dif x; \mathbb{R}^N\bigr)\longmapsto I_{\alpha/2}*f \in L^2(\mathbb{R}^N)
\end{equation}
is a continuous linear operator. Thus the conclusion follows and the stronger conclusion follows directly from the compact embedding Proposition~\ref{embedding}.
In fact, suppose that $u_n\rightharpoonup u$ weakly in $H^{1}_{V}(\mathbb{R}^N)$, by compactness, we know that, as \(n \to \infty\)
\[
u_n\to u \text{ strongly  in } L^{2}\bigl(\vert x\vert ^{\frac{N + \alpha}{p} - N}\dif x; \mathbb{R}^N\bigr),
\]
thus, up to a subsequence, $u_n \to u$ almost everywhere in $\mathbb{R}^N$. From the continuity of the map defined by \eqref{eqSuperposition}, we have that, as \(n \to \infty\)
\[
 \vert u_n\vert ^p\to \vert u\vert ^p \text{ strongly in } L^{\frac{2}{p}}\bigl(\vert x\vert ^{\frac{N + \alpha}{p} - N}\dif x; \mathbb{R}^N\bigr),
\]
\[
\text{ and } \vert u_n\vert ^{p-2}u_n\to \vert u\vert ^{p-2}u \text{ strongly in } L^{\frac{2}{p - 1}} (\vert x\vert ^{\frac{N + \alpha}{p} - N}\dif x; \mathbb{R}^N).
\]
By the Stein--Weiss inequality \cite{SW}, we deduce that, as \(n \to \infty\)
\[
I_\alpha*\vert u_n\vert ^p\to I_\alpha*\vert u\vert ^p\text{ strongly in } L^{\frac{2}{2 - p}}(\vert x\vert ^{-\frac{N + \alpha - p N}{2 - p}}\dif x; \mathbb{R}^N),
\]
thus
\begin{multline*}
  \bigl(I_\alpha*\vert u_n\vert ^p\bigr)\vert u_n\vert ^{p-2}u_n\to \bigl(I_\alpha \ast \vert u\vert ^p\bigr)\vert u\vert ^{p-2}u \\
  \text{ strongly  in }
  L^2 \bigl(\vert x\vert ^{N - \frac{N + \alpha}{p}}\dif x; \mathbb{R}^N\bigr) = \Bigl(L^2\bigl(\vert x\vert ^{\frac{N + \alpha}{p} - N}\dif x; \mathbb{R}^N\bigr)\Bigr)'.
\end{multline*}
By the continuous embedding results again, we have
\begin{equation*}
\bigl(I_\alpha*\vert u_n\vert ^p\bigr)\vert u_n\vert ^{p-2}u_n\to \bigl(I_\alpha \ast \vert u\vert ^p\bigr)\vert u\vert ^{p-2}u  \text{ strongly in } \big(H^{1}_{V}(\mathbb{R}^N)\bigr)'.\qedhere
\end{equation*}
\end{proof}
The compact embedding theorems imply
straightforwardly that the functional $J_p$ is well-defined and satisfies the Palais--Smale condition.

\begin{lemma}
Let \(N \ge 1\), \(\alpha \in (0, N)\) and $p>1$. If $\frac{1}{p}>\frac{N-2}{N+\alpha}$ and if
\[
\liminf\limits_{\vert x\vert \to+\infty}\frac{V(x)}{1+\vert x\vert ^{\frac{N+\alpha}{p}-N}}=+\infty,
\]
then the functional
$J_p$ is of class \(C^1\) on \(H^1_V (\mathbb{R}^N)\) and satisfies the Palais--Smale condition, that is,
any sequence \((u_n)_{n \in \mathbb{N}}\) in \(H^{1}_{V}(\mathbb{R}^N)\) such that $(J_p(u_n))_{n \in \mathbb{N}}$ is bounded, and $J_{p}'(u_n)\to 0$ strongly in  $(H^{1}_{V}(\mathbb{R}^N))'$ as \(n \to \infty\) has a subsequence that converges strongly in \(H^{1}_{V}(\mathbb{R}^N)\).
\end{lemma}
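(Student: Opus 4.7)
The plan is to reduce this lemma to the machinery already encapsulated in Proposition~\ref{lem2.2}: once the nonlocal operator is known to behave like a compact perturbation, both the $C^1$ regularity and the Palais--Smale compactness will follow from classical abstract arguments on the Hilbert space $H^1_V(\mathbb{R}^N)$.

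First, I would split $J_p = Q - \frac{1}{2p}\mathcal{G}_p$ where $Q(u)=\frac{1}{2}\|u\|_V^2$ is smooth. For the nonlocal part, differentiation under the integral sign (justified by a dominated convergence argument based on the Stein--Weiss bound of Proposition~\ref{lem2.2}) produces the G\^ateaux derivative
\[
  \langle \mathcal{G}_p'(u),v\rangle = 2p\int_{\mathbb{R}^N}\bigl(I_\alpha*\vert u\vert^p\bigr)\vert u\vert^{p-2}u\,v,
\]
and the continuity assertion in Proposition~\ref{lem2.2} says exactly that $u\mapsto (I_\alpha*\vert u\vert^p)\vert u\vert^{p-2}u$ is norm continuous from $H^1_V$ into its dual. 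A norm-continuous G\^ateaux derivative is Fr\'echet, hence $J_p\in C^1(H^1_V(\mathbb{R}^N))$.

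For the Palais--Smale condition I would follow the standard three-step routine. Given $(u_n)$ with $(J_p(u_n))$ bounded and $J_p'(u_n)\to 0$ in $(H^1_V)'$, the identity
\[
  J_p(u_n)-\tfrac{1}{2p}\langle J_p'(u_n),u_n\rangle = \bigl(\tfrac{1}{2}-\tfrac{1}{2p}\bigr)\|u_n\|_V^2
\]
yields boundedness in $H^1_V$, since $p>1$. I then extract a weak limit $u_n\rightharpoonup u$ and use the weak-to-strong statement of Proposition~\ref{lem2.2} to obtain strong convergence of $T_n:=(I_\alpha*\vert u_n\vert^p)\vert u_n\vert^{p-2}u_n$ in $(H^1_V)'$. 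Testing $J_p'(u_n)\to 0$ against $u_n-u$ gives $\langle u_n,u_n-u\rangle_V = \langle T_n,u_n-u\rangle + o(1)\to 0$: the pairing with $T_n$ vanishes because $T_n$ converges strongly while $u_n-u\rightharpoonup 0$ remains bounded. Expanding the Hilbert inner product and using $\langle u_n,u\rangle_V\to \|u\|_V^2$ forces $\|u_n\|_V\to\|u\|_V$, which together with weak convergence yields strong convergence in $H^1_V$.

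The main obstacle in this circle of ideas is always the loss of compactness of the Riesz potential term in the degenerate range $p\le \frac{N+\alpha}{N}$, where the Hardy--Littlewood--Sobolev inequality alone does not suffice. In the present setting that difficulty has already been absorbed by the Stein--Weiss inequality combined with the compact weighted embedding of Proposition~\ref{embedding}. Once Proposition~\ref{lem2.2} is in hand, the present lemma becomes a routine assembly of Hilbert-space arguments.
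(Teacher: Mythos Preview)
Your proposal is correct and follows essentially the same route as the paper: both use Proposition~\ref{lem2.2} to get continuity of the G\^ateaux derivative (hence $C^1$), the identity $J_p(u_n)-\frac{1}{2p}\langle J_p'(u_n),u_n\rangle=(\frac{1}{2}-\frac{1}{2p})\|u_n\|_V^2$ for boundedness, and the weak-to-strong property of $\mathcal{G}_p'$ from Proposition~\ref{lem2.2} to upgrade weak to strong convergence. The only cosmetic difference is that the paper writes $\|u_n-u\|_V^2=\langle J_p'(u_n)-J_p'(u),u_n-u\rangle+\frac{1}{2p}\langle \mathcal{G}_p'(u_n)-\mathcal{G}_p'(u),u_n-u\rangle$ directly, whereas you pass through $\|u_n\|_V\to\|u\|_V$; these are equivalent Hilbert-space computations.
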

\begin{proof}

To prove that the functional $J_p$ is of continuously differentiable, we only need to consider the nonlocal term $\mathcal{G}_p$ of $J_p$, that is,
\[
\mathcal{G}_p(u)=\int_{\mathbb{R}^N}\bigl(I_\alpha \ast \vert u\vert ^p\bigr)\vert u\vert ^p=\int_{\mathbb{R}^N}\big\vert I_{\alpha/2}*\vert u\vert ^p\big\vert ^2.
\]
By Proposition~\ref{lem2.2}, the functional $\mathcal{G}_p$ is continuous on $H^{1}_{V}(\mathbb{R}^N)$ and then the functional $J_p$ is also continuous.  For the continuous differentiability, we observe that by Proposition~\ref{lem2.2} again
the map $\mathcal{G}_p$ is G\^{a}teaux-differentiable on $H^{1}_{V}(\mathbb{R}^N)$ and hence it is continuously Fr\'{e}chet differentiable on that space \cite{W}*{Proposition 1.3} and the first part of the conclusion follows.

Suppose now that $(u_n)_{n \in \mathbb{N}}$ is a Palais--Smale sequence for the functional $J_p$, that is, as \(n \to \infty\)
\[
(J_p(u_n))_{n \in \mathbb{N}} \text{ is bounded}\qquad \text{ and }\qquad J_{p}'(u_n)\to 0 \text{ strongly in } \big(H^{1}_{V}(\mathbb{R}^N)\bigr)'.
\]
First, we observe that the sequence $(u_n)_{n\in\mathbb{N}}$ is bounded in the space $H_{V}^{1}(\mathbb{R}^N)$, because
\[
\Bigl(\frac{1}{2}-\frac{1}{2p}\Bigr)\Vert u_n\Vert _{V}^2=J_p(u_n)-\frac{1}{2p}\langle J_{p}'(u_n),u_n\rangle=J_p(u_n)+o(\Vert u_n\Vert _V).
\]
Up to a subsequence, we can assume that the sequence \((u_n)_{n \in \mathbb{N}}\) converges weakly to some function \(u\in H_{V}^{1}(\mathbb{R}^N)\).
By Proposition~\ref{lem2.2}, we have \(\mathcal{G}_p'(u_n) \to \mathcal{G}_p'(u)\) as \(n \to \infty\) strongly in \((H^1_V (\mathbb{R}^N))'\) --- that is, the map $\mathcal{G}_p'$ is weak to strong type. It follows then that, as \(n \to \infty\),
\begin{equation*}
\Vert u_{n}-u\Vert _{V}^{2} = \langle J_{p}'(u_{n})-J_{p}'(u), u_{n}-u\rangle +\frac{1}{2p}\langle \mathcal{G}_p'(u_n)-\mathcal{G}_p'(u),(u_{n}-u)\rangle
\to 0,
\end{equation*}
which concludes the proof.
\end{proof}

\section{Ground states and multiplicity solutions}
\label{sectionGroundstates}
We first give a proof of Theorem~\ref{thm1.2} by minimization of the Sobolev quotient and then prove the multiplicity
result Theorem~\ref{thm1.3} by the fountain theorem at the end of this section.

\begin{proof}[Proof of Theorem~\ref{thm1.2}]
We are going to find a  minimizer $u \in H^1_V (\mathbb{R}^N)$ for the infimum $\theta_p$,
defined by
\begin{equation*}
\theta_p:= \inf\; \Bigl\{\int_{\mathbb{R}^N}\vert \nabla u\vert ^2+V\vert u \vert^2 \st  u \in H^1 V (\mathbb{R}^N) \text{ and } \int_{\mathbb{R}^N}\bigl(I_\alpha \ast \vert u\vert ^p\bigr)\vert u\vert ^p =1\Bigr\}\,;
\end{equation*}
once this will be done a nontrivial solution $v$ of equation \eqref{eq1.1} will be obtained after a rescaling, more precisely, by taking \(v=\theta_p^{1/(2p-2)}u\).

Let $(u_n)_{n\in\mathbb{N}}$ in \(H^1_V (\mathbb{R}^N)\) be a minimizing sequence for $\theta_p$, then
\[
\Vert u_n\Vert ^{2}_V\to \theta_p\quad \text{ and } \int_{\mathbb{R}^N}\bigl(I_\alpha*\vert u_n\vert ^p\bigr)\vert u_n\vert ^p \dif x=1.
\]
Since the sequence $(u_n)_{n\in\mathbb{N}}$ is bounded in \(H^1_V (\mathbb{R}^N)\), we can assume  without loss of generality that $u_n\rightharpoonup u$ weakly in \(H^1_V (\mathbb{R}^N)\) as \(n \to \infty\).
By the weakly lower semi-continuity of the norm, we know that
\begin{equation}\label{eq3.3}
\Vert u\Vert _V\leq\liminf\limits_{n\to\infty}\Vert u_n\Vert _V.
\end{equation}
On the other hand, by Proposition~\ref{lem2.2}, we deduce that
\[
I_{\alpha/2}*\vert u_n\vert ^p\to I_{\alpha/2}*\vert u\vert ^p \text{ strongly in } L^{2}(\mathbb{R}^N),
\]
and thus, as $n\to\infty$,
\[
\mathcal{G}_p(u_n)=\int_{\mathbb{R}^N}\big\vert I_{\alpha/2}*\vert u_n\vert ^p\big\vert ^2\to \int_{\mathbb{R}^N}\big\vert I_{\alpha/2}*\vert u\vert ^p\big\vert ^2=\mathcal{G}_p(u).
\]
Therefore,
\[
\int_{\mathbb{R}^N}(I_{\alpha}*\vert u\vert ^p)\vert u\vert ^p \dif x=1,
\]
which leads to $u\neq 0$ and $\Vert u\Vert _{V}^{2}\geq \theta_p$ by the definition of $\theta_p$. This, together with the inequality \eqref{eq3.3},
implies that $\Vert u\Vert ^{2}_{V}=\theta_p$. Therefore, $u$ is a minimizer for $\theta_p$.
\end{proof}
\begin{remark}
In fact, the nontrivial solution obtained above is a positive solution with least energy, that is, a groundstate, see \cites{W}.
\end{remark}

In the remainder of this section, we prove  Theorem~\ref{thm1.3} on the  multiplicity
results by the fountain theorem.

\begin{proof}[Proof of Theorem~\ref{thm1.3}]
We are going to apply Bartsch's fountain theorem \cite{B} as it is presented in \cite{W}*{Theorem 3.6}.
We consider the action of the group \(\mathbb{Z}/2\mathbb{Z} = \{-1, 1\}\) on the space \(H^1_V (\mathbb{R}^N)\) defined for \(g \in \{-1, 1\}\) and \(u \in H^1_V (\mathbb{R}^N)\) by multiplication.
This action is continuous and isometric: for every \(g \in \{-1, 1\}\) and \(u \in H^1_V (\mathbb{R}^N)\), \(\Vert  g u \Vert _V = \Vert  u \Vert _V\). The functional $J_p$ is invariant under this action of the group \(\{-1, 1\}\) since it is an even functional.
Moreover, by the Borsuk--Ulam theorem \cite{W}*{Theorem D.17}, the action of \(\{-1, 1\}\) on $\mathbb{R}$ is admissible for the fountain theorem, that is, every continuous odd map $f : \partial U\to \mathbb{R}^{k-1}$ has a zero, where $k\geq 2$ and $U$ is an open bounded symmetric neighborhood of $0$ in $\mathbb{R}^k$.

We choose an orthonormal basis $(e_j)_{j\geq0}$ of \(H^1_V (\mathbb{R}^N)\) and define $X_j:=\mathbb{R}e_j$. Then the spaces $X_j$ are invariant, that is for every $g\in \{-1, 1\}$, $gX_j=X_j$, and $H^1_V (\mathbb{R}^N)=Y_k\oplus Z_k$, where $Y_k:=\bigoplus_{j=0}^{j=k}X_j$ and $Z_k:=\overline{\bigoplus_{j\geq k}X_j}^{\Vert  \cdot\Vert _V}$.
For every $k\geq 1$,  we set
\[
  B_k:=\bigl\{u\in Y_k\st \Vert u\Vert _V\leq \rho_k\bigr\},
  \quad N_k:=\bigl\{u\in Z_k\st \Vert u\Vert _V=r_k\bigr\}
\]
with $\rho_k>r_k>0$ and we define a sequence of minimax level,
\[
c_k:=\inf\limits_{\varphi\in \Gamma_k}\max\limits_{u\in B_k} J_{p}(\varphi(u)),
\]
where
\begin{equation*}
\Gamma_k:=\big\{\varphi\in C(B_k,H^1_V (\mathbb{R}^N))\st \varphi \text{ is even and }\varphi= \operatorname{id} \text{ on }\partial B_k \big\}.
\end{equation*}
To apply the fountain theorem, we still need  to verify the following conditions $(A_1)$ and $(A_2)$:
\begin{equation}\tag{$A_1$}\label{conditions1}
a_k:=\max\limits_{u\in \partial B_k} J_{p}(u)\leq 0
\end{equation}
\begin{equation}\tag{$A_2$}\label{conditions2}
b_k:=\inf\limits_{u\in N_k} J_{p}(u)\to +\infty\text{ as }k\to \infty.
\end{equation}
Since the unit ball in the finite dimensional linear subspace \(Y_k\) is a compact set, we deduce that the continuous functional \(\mathcal{G}_p\) achieves a positive minimum \(\sigma_k\) on that set.
On the finite-dimensional space $Y_k$, for any $u\in Y_k$ with $\Vert u\Vert _V=\rho_k$,
\[
J_{p}(u)=\frac{1}{2}\Vert u\Vert _{V}^2-\frac{1}{2p}\Vert u\Vert _{V}^{2p}\,\mathcal{G}_p\Bigl(\frac{u}{\Vert u\Vert _V}\Bigr)\leq \frac{1}{2}\rho^{2}_{k}-\frac{\sigma_k}{2p}\rho_{k}^{2p}.
\]
Thus the condition \eqref{conditions1} follows for sufficiently large $\rho_k$ since $p>1$.

We now turn to consider \eqref{conditions2}. We define
\[
\beta_k:=\sup\,\bigl\{\Vert I_{\alpha/2} \ast \abs{u}^p \Vert _{L^2} \st u\in Z_k\text{ and } \Vert u\Vert _V=1\bigr\}.
\]
We show that $\beta_k\to 0$ as $k\to \infty$ with minor modification following \cite{W}*{Proof of Lemma 3.8}.
Observe that $0<\beta_{k+1}\leq\beta_{k}$, so that $\beta_k \to\beta\geq 0$, as $k\to\infty$. By the definition of  $\beta_k$, we know that for every $k\geq 0$, there exists $u_k\in Z_k$ such that
\[
\Vert u_k\Vert _V=1\text{ and  } \bigl\Vert I_{\alpha/2} \ast \abs{u_k}^p\bigr\Vert _{L^{2}}>\frac{\beta_k}{2}.
\]
By definition of $Z_k$, we have $u_k\rightharpoonup 0$ weakly in \(H^1_V (\mathbb{R}^N)\).
Thus by the weak to strong convergence property of Proposition~\ref{lem2.2}, we deduce that
\(I_{\alpha/2} \ast \abs{u_k}^p \to I_{\alpha/2} \ast \abs{u}^p\) as \(k \to \infty\) strongly in \(L^2 (\mathbb{R}^N)\). Therefore $\beta=0$.

For every $u \in Z_k$,
\begin{equation*}
J_{p}(u) \geq \frac{1}{2}\Vert u\Vert _{V}^2-\frac{\beta_{k}^{2}}{2 p}\Vert u\Vert _{V}^{2p}.
\end{equation*}
Set $r_k:=1/(\beta_{k})^{1/(p - 1)}$, then we have
\begin{equation*}
J_p(u)\geq \Bigl(\frac{1}{2}-\frac{1}{2p}\Bigr)\frac{1}{\beta_k^{\frac{2}{p - 1}}} \to +\infty \text{ as } k\to\infty.
\end{equation*}
The condition \eqref{conditions2} holds, it thus follows from the fountain theorem that $(c_k)_{k  \in \mathbb{N}}\) is a sequence of critical values of $J_p$ tending to \(+\infty\).
This concludes the proof.
\end{proof}

\section{Existence of nodal solution with least energy}
\label{sectionNodal}

In this section, we shall prove the existence of nodal solutions by minimization method on the Nehari nodal set defined by
 \[
 \mathcal{M}_p=\bigl\{u\in H^1_V (\mathbb{R}^N) \st  u^+\neq 0 \ne u^-\text{ and } \langle J_{p}'(u),u^+\rangle =\langle J_{p}'(u),u^-\rangle =0\bigr\}.
 \]
It is obvious that all the sign-changing solutions are contained in $\mathcal{M}_p$.
We are going to study whether it is possible to obtain a least energy nodal solution by finding a minimizer for 
\[
  c_p:=\inf\limits_{u\in\mathcal{M}_p} J_p(u).
\]

The following lemma plays an essential role in showing the existence of the minimizer for $c_p$. The proof follows the strategy of
\citelist{\cite{GV}*{Proof of proposition 3.2}\cite{Y}*{Lemma 3.2}}.
\begin{lemma}\label{lem4.1}
Let $p>2$. For any $u \in H^1_V (\mathbb{R}^N)$ with $u^{\pm}\neq0$, there exists a unique pair $(\Bar{t},\Bar{s})\in(0,+\infty)^2$ such that $\Bar{t}u^++\Bar{s}u^-\in\mathcal{M}_p$ and if $u\in\mathcal{M}_p$, then $J_p(u)\geq J_p(tu^++su^-)$ for any $t\geq0,s\geq0$.
\end{lemma}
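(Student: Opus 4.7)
The plan is to reduce everything to an analysis of the scalar function $\Psi\colon[0,+\infty)^{2}\to\mathbb{R}$ defined by $\Psi(t,s)=J_{p}(tu^{+}+su^{-})$. Since $u^{+}$ and $u^{-}$ have disjoint supports, $\vert tu^{+}+su^{-}\vert^{p}=t^{p}(u^{+})^{p}+s^{p}\vert u^{-}\vert^{p}$, so a direct expansion gives
\[
  \Psi(t,s)=\tfrac{t^{2}}{2}a_{+}+\tfrac{s^{2}}{2}a_{-}-\tfrac{1}{2p}\bigl(t^{2p}A_{++}+2t^{p}s^{p}A_{+-}+s^{2p}A_{--}\bigr),
\]
where $a_{\pm}:=\Vert u^{\pm}\Vert_{V}^{2}$ and $A_{\sigma\tau}:=\int_{\mathbb{R}^{N}}(I_{\alpha}\ast\vert u^{\sigma}\vert^{p})\vert u^{\tau}\vert^{p}$. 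A short computation shows that $(t,s)\in(0,+\infty)^{2}$ is a critical point of $\Psi$ precisely when $tu^{+}+su^{-}\in\mathcal{M}_{p}$, so the minimisation claim for $u\in\mathcal{M}_{p}$ amounts to proving that $(1,1)$ realises $\max_{[0,+\infty)^{2}}\Psi$.

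For existence of $(\bar t,\bar s)$ I would first observe that $\Psi$ is continuous, $\Psi(0,0)=0$, and $\Psi(t,s)\to-\infty$ as $(t,s)\to\infty$ (since $2p>2$), so the maximum is attained on $[0,+\infty)^{2}$. To rule out boundary maximisers, from $\partial_{s}\Psi=sa_{-}-s^{2p-1}A_{--}-t^{p}s^{p-1}A_{+-}$ and $p-1>1$ the linear term $sa_{-}$ dominates near $s=0$, giving $\partial_{s}^{2}\Psi(t,0)=a_{-}>0$, so $\Psi$ strictly increases off the axis $\{s=0\}$; symmetrically for $\{t=0\}$. Hence the maximiser lies in $(0,+\infty)^{2}$ and delivers a critical pair $(\bar t,\bar s)$ with $\bar tu^{+}+\bar su^{-}\in\mathcal{M}_{p}$.

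Uniqueness is the delicate step, and I expect it to be the main obstacle. Given two such critical pairs $(t_{1},s_{1})\neq(t_{2},s_{2})$, set $\rho=t_{2}/t_{1}$ and $\sigma=s_{2}/s_{1}$ and, after relabelling, assume $\rho\leq 1$. Subtracting the two critical relations yields
\begin{gather*}
  t_{1}^{2p-2}(1-\rho^{2p-2})A_{++}=t_{1}^{p-2}s_{1}^{p}(\rho^{p-2}\sigma^{p}-1)A_{+-},\\
  s_{1}^{2p-2}(\sigma^{2p-2}-1)A_{--}=t_{1}^{p}s_{1}^{p-2}(1-\rho^{p}\sigma^{p-2})A_{+-}.
\end{gather*}
If $\sigma\leq 1$ also, the first relation has non-negative left-hand side and non-positive right-hand side (since $p>2$ makes $\rho^{p-2}\sigma^{p}\leq 1$), so both vanish and this forces $\rho=\sigma=1$. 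The only remaining case is $\rho<1<\sigma$; then a sign analysis on the two relations forces $\rho^{p-2}\sigma^{p}>1$ and $\rho^{p}\sigma^{p-2}<1$, and multiplying them (after cancelling $t_{1}^{p}s_{1}^{p}$) yields
\[
  (1-\rho^{2p-2})(\sigma^{2p-2}-1)\,A_{++}A_{--}=(\rho^{p-2}\sigma^{p}-1)(1-\rho^{p}\sigma^{p-2})\,A_{+-}^{2}.
\]

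The decisive algebraic identity, obtained by direct expansion, is
\[
  (1-\rho^{2p-2})(\sigma^{2p-2}-1)-(\rho^{p-2}\sigma^{p}-1)(1-\rho^{p}\sigma^{p-2})=(\sigma^{p-2}-\rho^{p-2})(\sigma^{p}-\rho^{p}),
\]
whose right-hand side is strictly positive for $p>2$ and $\rho<\sigma$. Combined with the strict Cauchy--Schwarz inequality $A_{+-}^{2}<A_{++}A_{--}$ --- which holds because $I_{\alpha/2}\ast(u^{+})^{p}$ and $I_{\alpha/2}\ast\vert u^{-}\vert^{p}$ cannot be proportional, their preimages being nonzero with disjoint supports while $I_{\alpha/2}$ is injective --- this contradicts the displayed equality. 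With uniqueness established, $(\bar t,\bar s)$ is the unique interior critical point and the global maximum of $\Psi$ on $[0,+\infty)^{2}$. Applied to $u\in\mathcal{M}_{p}$, for which $(1,1)$ is such a critical point, this yields $J_{p}(u)\geq J_{p}(tu^{+}+su^{-})$ for every $t,s\geq 0$, completing the lemma.
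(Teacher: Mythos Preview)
Your argument is correct, but it takes a genuinely different route from the paper's proof of Lemma~\ref{lem4.1}. The paper performs the substitution $t\mapsto t^{1/p}$, $s\mapsto s^{1/p}$ and studies
\[
  \Phi_p(t,s)=J_p(t^{1/p}u^++s^{1/p}u^-)=\tfrac{1}{2}t^{2/p}\Vert u^+\Vert_V^2+\tfrac{1}{2}s^{2/p}\Vert u^-\Vert_V^2-\tfrac{1}{2p}\int_{\mathbb{R}^N}\bigl\lvert I_{\alpha/2}\ast(t\lvert u^+\rvert^p+s\lvert u^-\rvert^p)\bigr\rvert^2.
\]
With this change of variables the Riesz term becomes a positive-definite quadratic form in $(t,s)$, while for $p>2$ the map $t\mapsto t^{2/p}$ is strictly concave; hence $\Phi_p$ is strictly concave on $(0,+\infty)^2$, and uniqueness of the interior critical point (and its identification with the global maximum) is immediate. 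The boundary is excluded because $\partial_s\Phi_p(t,0^+)=+\infty$.

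You instead keep the original parametrisation and obtain uniqueness by subtracting the two critical-point systems, reducing to the neat identity
\[
  (1-\rho^{2p-2})(\sigma^{2p-2}-1)-(\rho^{p-2}\sigma^p-1)(1-\rho^p\sigma^{p-2})=(\sigma^{p-2}-\rho^{p-2})(\sigma^p-\rho^p),
\]
and then invoking the strict Cauchy--Schwarz inequality $A_{+-}^2<A_{++}A_{--}$, justified via injectivity of the Riesz potential. This is more computational but makes explicit the structural reason for uniqueness --- positivity of the Riesz bilinear form --- which in the paper is hidden inside the concavity statement. One small remark: your case split jumps from ``$\sigma\le 1$'' to ``$\rho<1<\sigma$'', tacitly skipping $\rho=1<\sigma$; that case is ruled out instantly by the first relation (left side zero, right side positive), so the gap is cosmetic rather than substantive.
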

\begin{proof}
We define the function $\Phi_p:[0,+\infty)^2\to \mathbb{R}$ for each \(s, t \in [0, +\infty)\) by
\begin{equation}
\label{phi}
\Phi_p(t,s):= J_p(t^{\frac{1}{p}}u^++s^{\frac{1}{p}}u^-)
=\frac{t^{\frac{2}{p}}}{2}\Vert u^+\Vert _{V}^{2}+\frac{s^{\frac{2}{p}}}{2}\Vert u^-\Vert _{V}^{2}-\frac{1}{2p}\int_{\mathbb{R}^N}
\vert I_{\alpha/2}*(t\vert u^+\vert ^p+s\vert u^-\vert ^p)\vert ^2,
\end{equation}
where $u=u^++u^-$ with $u^{\pm}\neq0$.
The condition $t^{\frac{1}{p}}u^++s^{\frac{1}{p}}u^-\in\mathcal{M}_p$ is equivalent to $\nabla\Phi_p(t,s)=0$
with $t>0,s>0$. It is sufficient to prove that there exists a unique critical point for the function $\Phi_p$ on the domain $(0,+\infty)^2$.

By the definition of $\Phi_p$,
\begin{equation*}
\Phi_p(t,s) \leq\frac{t^{\frac{2}{p}}}{2}\Vert u^+\Vert _{V}^{2}+\frac{s^{\frac{2}{p}}}{2}\Vert u^-\Vert _{V}^{2}-\frac{t^2}{2p}\int_{\mathbb{R}^N}
\bigabs{I_{\alpha/2}*\vert u^+\vert ^p}^2 -\frac{s^2}{2p}\int_{\mathbb{R}^N}\bigabs{I_{\alpha/2}*\vert u^-\vert ^p}^2,
\end{equation*}
from which we can get that
\begin{multline*}
\lim\limits_{t^2+s^2\to+\infty}\Phi_p(t,s)
\leq \lim\limits_{t^2+s^2\to+\infty}\bigg(\frac{t^{\frac{2}{p}}}{2}\Vert u^+\Vert _{V}^{2}-\frac{t^2}{2p}\int_{\mathbb{R}^N}
\bigabs{I_{\alpha/2}*\vert u^+\vert ^p}^2\\
\quad +\frac{s^{\frac{2}{p}}}{2}\Vert u^-\Vert _{V}^{2}-\frac{s^2}{2p}\int_{\mathbb{R}^N}\bigabs{I_{\alpha/2}*\vert u^-\vert ^p}^2\bigg)=-\infty.
\end{multline*}
Therefore, $\Phi_p$ must have at least one global maximum point on \([0, \infty) \times [0, \infty)\).

Since the quadratic form 
\[
 (t, s) \mapsto \int_{\mathbb{R}^N}
\bigabs{I_{\alpha/2}*(t\vert u^+\vert ^p+s\vert u^-\vert ^p)}^2
\]
is positive definite, the function \(\Phi_p\) is strictly concave. In particular, any critical point is a maximum point and there is at most one maximum point. 

The conclusion follows provided that we can rule out that this 
maximum point is on the boundary of $[0,+\infty)^2$.
Suppose that $(t_0,0)$ with $t_0\ge 0$ is the global maximum point of $\Phi_p$, then $\frac{\partial\Phi_p(t_0,0)}{\partial t}\le 0$. However, a direct computation shows that 
\begin{equation*}
\frac{\partial \Phi_p(t_0,s)}{\partial s} \Big\vert_{s = 0} = +\infty,
\end{equation*}
Similarly, $\Phi_p$ can not achieve its global maximum on $(0,s)$ for any $s \ge 0$.
\end{proof}

\begin{proof}[Proof of Theorem~\ref{thm1.5} when $p>2$] For the case of $p>2$, our proof, in fact, relies on the compact embedding: \(H^1_V (\mathbb{R}^N)\hookrightarrow L^q(\mathbb{R}^N)$ with $\frac{1}{q}\in(\frac{1}{2}-\frac{1}{N},\frac{1}{2}]\) and can be carried out into two steps. First, we show that $c_p>0$ is attained by some minimizer $w\in\mathcal{M}_p$. Then, we prove the minimizer $w$ for $c_p$ is indeed a critical point of $J_p$, thus being a nodal solution of \eqref{eq1.1}.

\medbreak

\noindent\textbf{Step 1} \emph{The energy level $c_p>0$ is achieved by some minimizer \(w \in \mathcal{M}_p\).}

\medbreak

Let $(u_n)_{n \in \mathbb{N}}\) be a minimizing sequence for $c_p$ in \(\mathcal{M}_p$, namely, $\lim\limits_{n\to\infty}J_p(u_n)=c_p$.
We first observe that
\begin{equation}\label{eq4.3}
\Bigl(\frac{1}{2}-\frac{1}{2p}\Bigr) \Vert u_n\Vert ^2=
J_p(u_n)-\frac{1}{2p}\langle J_{p}'(u_n),u_n\rangle
= J_p (u_n)  \to
c_p,
\end{equation}
from which we know that the sequence $(u_n)_{n \in \mathbb{N}}$ is bounded in \(H^1_V (\mathbb{R}^N)\) and so are the sequences
$(u_{n}^{\pm})_{n \in \mathbb{N}}$. Passing to a subsequence, there exist $u^{\pm}\in H^1_V (\mathbb{R}^N)$ such
that
\[
u_{n}^{\pm}\rightharpoonup u^{\pm}\text{ weakly in } H^1_V (\mathbb{R}^N).
\]

By the constraint $\langle J_{p}'(u_n),u_{n}^{\pm} \rangle=0$, and by the Hardy--Littlewood--Sobolev inequality \cite{LL}*{Theorem 4.3}, which can be seen as a special case of the Stein--Weiss inequality \cite{SW}, we deduce that
\begin{equation}\label{eq4.4}
\begin{split}
C_1\Vert u_{n}^{\pm}\Vert ^{2}_{L^{\frac{2Np}{N+\alpha}}} \le \Vert u_{n}^{\pm}\Vert_V ^2&=\int_{\mathbb{R}^N} \bigl(I_\alpha*\vert u_{n}\vert ^p\bigr)\vert u_{n}^{\pm}\vert ^p \dif x\\
 &\leq C_2\Vert u_n\Vert ^{p}_{L^{\frac{2Np}{N+\alpha}}}\Vert u_{n}^{\pm}\Vert ^{p}_{L^{\frac{2Np}{N+\alpha}}}
 \leq C_3\Vert u_{n}^{\pm}\Vert ^{p}_{L^{\frac{2Np}{N+\alpha}}},
\end{split}
 \end{equation}
which yields, since by our constraint again \(u_n^\pm \ne 0\), that
\begin{equation}
\label{eq4.nonvanish}
  \liminf_{n \to \infty} \Vert u_{n}^{\pm}\Vert _{L^{\frac{2Np}{N+\alpha}}} > 0.
\end{equation}
Since the embedding $H^1_V (\mathbb{R}^N)\hookrightarrow L^{\frac{2Np}{N+\alpha}}(\mathbb{R}^N)$ is compact, we have
\[
u_{n}^{\pm}\to u^{\pm}\text{ strongly  in } L^{\frac{2Np}{N+\alpha}}(\mathbb{R}^N),
\]
and then we deduce from \eqref{eq4.nonvanish} that $u^{\pm}\neq 0$.
Next, by the Hardy--Littlewood--Sobolev inequality,  we see that
\begin{equation}
\label{eqNodConvSWSym}
\int_{\mathbb{R}^N} \bigl(I_\alpha*\vert u^{\pm}_n\vert ^p\bigr)\vert u^{\pm}_n\vert ^p\to\int_{\mathbb{R}^N} \bigl(I_\alpha*\vert u^{\pm}\vert ^p\bigr)\vert u^{\pm}\vert ^p,
\end{equation}
and
\begin{equation}
\label{eqNodConvSWMix}
\int_{\mathbb{R}^N} \bigl(I_\alpha*\vert u_n^+\vert ^p\bigr)\vert u_n^-\vert ^p\to\int_{\mathbb{R}^N} \bigl(I_\alpha*\vert u^+\vert ^p\bigr)\vert u^-\vert ^p.
\end{equation}
Hence, by Lemma~\ref{lem4.1}, there exists a unique pair $(t_0,s_0) $ with $t_0,s_0>0$ such that $t_0u^++s_0u^-\in\mathcal{M}_p$. Moreover, we have
\begin{equation*}
\begin{split}
c_p\leq J_p(t_0u^++s_0u^-)
&\leq \liminf\limits_{n\to\infty}J_p(t_0u^{+}_n+s_0u^{-}_n) \\
&\le \limsup\limits_{n\to\infty}J_p(t_0u^{+}_n+s_0u^{-}_n)
\le \lim\limits_{n\to\infty}J_p(u_n)= c_p.
\end{split}
 \end{equation*}
The second inequality above follows from the weakly lower semi-continuity of the norm and from \eqref{eqNodConvSWSym} and \eqref{eqNodConvSWMix}. 
We conclude by setting $w=t_0u^++s_0u^-$.

\medbreak

\noindent \textbf{Step 2} \emph{$J_{p}'(w)=0$.}

\medbreak

To complete this, we follow the idea of perturbing the functional  in one direction \cite{LW}. This argument seems simpler than previous deformation arguments \cites{AS,WZ}.

Suppose that $w$ is not a critical point, then there exists a function $v\in C^{\infty}_{c}(\mathbb{R}^N)$ such that $\langle J_{p}'(w),v\rangle=-2$.
Since $J_p$ is continuously differentiable, there exists $\delta>0$ small enough such that
\begin{equation}\label{eq4.5}
\langle J_{p}'(tu^++su^-+\epsilon v),v\rangle\leq-1, \qquad \text{ if }\vert t-t_0\vert +\vert s-s_0\vert \leq \delta \text{ and } 0\leq\epsilon\leq\delta.
\end{equation}
We choose a continuous function $\eta:D\to [0,1]$, where $D$ being a bounded domain and is defined by
\[
D:=\big\{(t,s)\in \mathbb{R}^2: \vert t-t_0\vert \leq \delta, \vert s-s_0\vert \leq \delta\big\},
\]
such that
\begin{equation*}
\eta(t,s)=\begin{cases}
1 &\text{ if } \vert t-t_0\vert \leq \frac{\delta}{4} \text{ and } \vert s-s_0\vert \leq \frac{\delta}{4},  \\
0 &\text{ if } \vert t-t_0\vert \geq \frac{\delta}{2} \text{ or } \vert s-s_0\vert \geq \frac{\delta}{2}.
\end{cases}
\end{equation*}
We define $Q\in C(D,H^1_V (\mathbb{R}^N))$ for \((t, s) \in D\) by
\[
Q(t,s)=tu^++su^-+\delta\eta(t,s)v.
\]
and $h:D\to \mathbb{R}^2$ for \((t, s) \in D\) as
\[
h(t,s):=\bigl(\langle J_{p}'(Q(t,s)),Q(t,s)^+\rangle,\langle J_{p}'(Q(t,s)),Q(t,s)^-\rangle\bigr).
\]
The map $h$ is continuous because the map $u\mapsto u^+$ is continuous in \(H^1_V (\mathbb{R}^N)\). If $\vert t-t_0\vert =\delta$, or $\vert s-s_0\vert =\delta$,
then $\eta=0$ by its definition, therefore $Q(t,s)=tu^++su^-$, which implies that $h(t,s)\neq(0,0)$ by Lemma~\ref{lem4.1}.
As a consequence, the Brouwer topological degree $\deg(h, \operatorname{int}(D), 0)$ is well defined and
$\deg(h, \operatorname{int}(D), 0)=1$, thus there exists a pair $(t_1,s_1)\in \operatorname{int}(D)$ such that $h(t_1,s_1)=(0,0)$.
Thus $Q(t_1,s_1)\in \mathcal{M}_p$, and then, it follows from the definition of \(c_p\) that 
\begin{equation}\label{eq4.6}
J_{p}(Q(t_1,s_1))\geq c_p.
\end{equation}
On the other hand, from equation \eqref{eq4.5} we arrive at
 \begin{equation}\label{eq4.7}
 \begin{split}
 J_p(Q(t_1,s_1))&=J_p(t_1u^++s_1u^-)+\int_{0}^1 \langle J_{p}'(t_1u^++s_1u^-+\rho \delta\eta(t_1,s_1)v),\delta \eta(t_1,s_1)v\rangle \dif \rho \\
 &\leq J_p(t_1u^++s_1u^-)-\delta\eta(t_1,s_1).
 \raisetag{14pt}
 \end{split}
 \end{equation}
If $(t_1,s_1)\neq(t_0,s_0)$, we know from Lemma~\ref{lem4.1} that $J_p(t_1u^++s_1u^-)<J_p(t_0u^++s_0u^-)=c_p$, thus from inequality \eqref{eq4.7}
\[
J_p(Q(t_1,s_1))\leq J_p(t_1u^++s_1u^-)<c_p.
\]
If $(t_1,s_1)=(t_0,s_0)$, then $\eta(t_1,s_1)=1$, follows from \eqref{eq4.7} we also have
\[
J_p(Q(t_1,s_1))\leq c_p-\delta <c_p,
\]
which contradicts inequality \eqref{eq4.6} in any case.
\end{proof}

We bring to the attention of the reader that the assumptions on the potential \(V\) are only used to ensure the compactness of the embedding \(H^1_V (\mathbb{R}^N)\hookrightarrow L^\frac{2 N p}{N + \alpha}(\mathbb{R}^N)\).

\medbreak

The case  $p=2$ is more complicated since we have neither a property similar to Lemma~\ref{lem4.1} nor an estimate like \eqref{eq4.4} to guarantee $u^\pm\neq0$, where $u=u^++u^-$ is the weak limit of a minimizing sequence.
To find a nodal solution with least energy for the quadratic case, we follow the idea of \cites{GMV} of employing equation \eqref{eq1.1} with $p>2$ as a regularisation for the quadratic equation \eqref{eq1.1} and then pass to the limit as $p\searrow 2$. We start our proof by showing that  Nehari nodal set $\mathcal{M}_2$ is not empty.

\begin{lemma}
\label{lemmaNehariNotEmpty}
One has $\mathcal{M}_2\neq\emptyset$. In particular, $c_2 < +\infty$.
\end{lemma}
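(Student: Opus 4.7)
\emph{Plan.} I would exhibit an explicit element of $\mathcal{M}_2$ built from two concentrating bumps with disjoint supports. Fix $\phi \in C_c^\infty(\mathbb{R}^N)$ with $\phi \ge 0$, $\phi \not\equiv 0$ and $\supp \phi \subset B(0,1)$, together with two distinct points $y_1, y_2 \in \mathbb{R}^N$. For $\varepsilon \in \bigl(0, \tfrac{1}{2}\abs{y_1 - y_2}\bigr)$ set $\phi_i^\varepsilon(x) := \varepsilon^{-N/2}\phi((x-y_i)/\varepsilon)$. These are nonnegative bumps with disjoint supports, so for any $t, s > 0$ the function $u := t\phi_1^\varepsilon - s\phi_2^\varepsilon$ satisfies $u^+ = t\phi_1^\varepsilon$, $u^- = -s\phi_2^\varepsilon$ and $\abs{u}^2 = t^2(\phi_1^\varepsilon)^2 + s^2(\phi_2^\varepsilon)^2$; I would look for $(t,s) \in (0,+\infty)^2$ making $u \in \mathcal{M}_2$.

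Because the supports are disjoint, the two constraints $\langle J_2'(u), u^\pm\rangle = 0$ reduce, after dividing by $t^2$ and $s^2$ respectively, to the linear system
\begin{equation*}
  a_{11}t^2 + a_{12}s^2 = \Vert\phi_1^\varepsilon\Vert_V^2, \qquad a_{12}t^2 + a_{22}s^2 = \Vert\phi_2^\varepsilon\Vert_V^2,
\end{equation*}
with $a_{ij} := \int_{\mathbb{R}^N}\bigl(I_\alpha * (\phi_i^\varepsilon)^2\bigr)(\phi_j^\varepsilon)^2 = \int_{\mathbb{R}^N}\bigl(I_{\alpha/2}*(\phi_i^\varepsilon)^2\bigr)\bigl(I_{\alpha/2}*(\phi_j^\varepsilon)^2\bigr)$. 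The coefficient matrix is a Gram matrix and, since $(\phi_1^\varepsilon)^2$ and $(\phi_2^\varepsilon)^2$ are linearly independent (disjoint supports, both nonzero), it is strictly positive definite by Cauchy--Schwarz. The system thus admits a unique solution $(t^2, s^2)$, and by Cramer's rule, positivity of this solution is equivalent to the two strict inequalities $a_{jj}\Vert\phi_i^\varepsilon\Vert_V^2 > a_{12}\Vert\phi_j^\varepsilon\Vert_V^2$ for $\{i,j\} = \{1,2\}$.

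The main point is to verify these inequalities in the small-$\varepsilon$ regime, and this is where the concentration is used. A direct change of variables $x = y_i + \varepsilon v$ gives $a_{ii} = \varepsilon^{-(N-\alpha)}\int(I_\alpha*\phi^2)\phi^2$, whereas $a_{12}$ stays uniformly bounded as $\varepsilon \to 0$ (its leading order is $A_\alpha\Vert\phi\Vert_{L^2}^4/\abs{y_1 - y_2}^{N-\alpha}$, since $a_{12}$ only ``sees'' the fixed separation $\abs{y_1 - y_2}$). Similarly $\Vert\phi_i^\varepsilon\Vert_V^2 = \varepsilon^{-2}\Vert\nabla\phi\Vert_{L^2}^2 + V(y_i)\Vert\phi\Vert_{L^2}^2 + o(1)$, so the ratio $\Vert\phi_1^\varepsilon\Vert_V^2/\Vert\phi_2^\varepsilon\Vert_V^2$ tends to $1$. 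Because $N > \alpha$, the left-hand sides of the inequalities above are of order $\varepsilon^{-(N-\alpha+2)}$ while the right-hand sides are only of order $\varepsilon^{-2}$; both hold for $\varepsilon$ sufficiently small, and the corresponding $u$ lies in $\mathcal{M}_2$.

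Since $u \in H^1_V(\mathbb{R}^N)$ has compact support, Proposition~\ref{lem2.2} ensures $J_2(u) < +\infty$, whence $c_2 \le J_2(u) < +\infty$. The only slightly delicate step is the scaling bookkeeping in the third paragraph, and in particular the fact that $a_{12}$ does \emph{not} inherit the $\varepsilon^{-(N-\alpha)}$ blow-up of the self-interaction $a_{ii}$; it reduces to straightforward dimensional analysis combined with the observation that the Riesz integral is singular at small distances of order $\varepsilon$ but regular at the fixed scale $\abs{y_1-y_2}$.
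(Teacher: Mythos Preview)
Your proposal is correct and follows essentially the same approach as the paper: two concentrating bumps with disjoint supports, reduction of the Nehari-nodal constraints to a $2\times 2$ linear system in $(t^2,s^2)$, and Cramer's rule together with a scaling comparison (self-interaction $\sim \varepsilon^{-(N-\alpha)}$ versus bounded cross-interaction) to secure positivity for small $\varepsilon$. The only cosmetic difference is your $L^2$-normalization $\varepsilon^{-N/2}$ of the bumps, which shifts the exponents but not the argument.
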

\begin{proof}
We are going to construct a function $w\in H^1_V (\mathbb{R}^N)$ with $w^\pm\neq0$ such that the following linear system admits a solution $(t,s)$ with $t,s>0$,
\begin{equation}\label{eqs1}
  \begin{pmatrix}
 \int_{\mathbb{R}^N} (I_\alpha *\vert w^+\vert ^2)\vert w^+\vert ^2  &  \int_{\mathbb{R}^N} (I_\alpha *\vert w^+\vert ^2)\vert w^-\vert ^2 \\
 \int_{\mathbb{R}^N} (I_\alpha *\vert w^+\vert ^2)\vert w^-\vert ^2 &  \int_{\mathbb{R}^N} (I_\alpha* \vert w^-\vert ^2)\vert w^-\vert ^2
\end{pmatrix}
\begin{pmatrix}
t^2\\ s^2
\end{pmatrix}
=\begin{pmatrix}
\Vert w^+\Vert _{V}^2\\\Vert w^-\Vert _{V}^2
\end{pmatrix}.
\end{equation}
The conclusion will then follow since $tw^++sw^-\in\mathcal{M}_p$.
By Cramer's Rule, it is sufficient to find a function $w \in H^1_V (\mathbb{R}^N)$ with $w^\pm\neq0$ such that
\begin{equation}
\label{eq2}
\frac{\int_{\mathbb{R}^N} (I_\alpha *\vert w^+\vert ^2)\vert w^-\vert ^2}{\int_{\mathbb{R}^N} (I_\alpha *\vert w^-\vert ^2)\vert w^-\vert ^2}
 < \frac {\Vert w^+\Vert _{V}^2}{\Vert w^-\Vert _{V}^2}
< \frac{\int_{\mathbb{R}^N} (I_\alpha *\vert w^+\vert ^2)\vert w^+\vert ^2}{\int_{\mathbb{R}^N} (I_\alpha *\vert w^+\vert ^2)\vert w^-\vert ^2}.
\end{equation}

Let $U\in C^{1}(\mathbb{R}^N)\backslash\{0\}$  such that $U\geq 0$ and \(\supp U \subset B(0, 1)\). We choose $a_+, a_- \not\in$ supp $U$ and define
 \[
 w_\sigma(x):=U(\tfrac{x-a_+}{\sigma})-U(\tfrac{x-a_-}{\sigma}).
 \]
Since the function $U$ has compact support, we know that $w_{\sigma}^+(x)=U(\frac{x-a_+}{\sigma})$ and $w_{\sigma}^-(x)=-U(\frac{x-a_-}{\sigma})$ for sufficiently small $\sigma$. To end the proof, we show that the estimate \eqref{eq2} holds as $\sigma$ becomes small enough. In fact,
\begin{equation*}
\begin{split}
 \Vert w_{\sigma}^\pm\Vert _{V}^2&=\int_{\mathbb{R}^N}\sigma^{N-2} \vert \nabla U(x)\vert ^2+\sigma^NV(a_{\pm}+\sigma x) U^2(x) \dif x\\
 &=\sigma^{N-2} \Bigl(\int_{\mathbb{R}^N}\vert \nabla U\vert ^2 + O (\sigma^{2})\Bigr),
\end{split}
 \end{equation*}
\begin{equation*}
\begin{split}
\int_{\mathbb{R}^N} (I_\alpha *\vert w_{\sigma}^\pm\vert ^2)\vert w_{\sigma}^\pm\vert ^2
&= \int_{\mathbb{R}^N}\int_{\mathbb{R}^N}\frac{A_\alpha\vert U(\frac{y-a_{\pm}}{\sigma})\vert ^2\vert U(\frac{x-a_{\pm}}{\sigma})\vert ^2}{\vert x-y\vert ^{N-\alpha}}\dif y \dif x\\
&= \sigma^{N+\alpha}\int_{\mathbb{R}^N} (I_\alpha *\vert U\vert ^2)\vert U\vert ^2,
\end{split}
\end{equation*}
and when \(\sigma \le \abs{a_+ - a_-}/4\),
\begin{equation*}
\begin{split}
\int_{\mathbb{R}^N} (I_\alpha *\vert w_{\sigma}^+\vert ^2)\vert w_{\sigma}^-\vert ^2
&=  \sigma^{2N}\int_{\mathbb{R}^N}\int_{\mathbb{R}^N}\frac{A_\alpha\vert U(y)\vert ^2\vert U(x)\vert ^2}{\vert (a_-+\sigma x)-(a_++\sigma y)\vert ^{N-\alpha}}\dif y \dif x\\
&\leq \sigma^{2N}\int_{\mathbb{R}^N}\int_{\mathbb{R}^N}\frac{2^{N-\alpha}A_\alpha\vert U(y)\vert ^2\vert U(x)\vert ^2}{\vert a_--a_+\vert ^{N-\alpha}}\dif y \dif x.
\end{split}
\end{equation*}
We observe that in \eqref{eq2}, since \(\alpha < N\), the left-hand side goes to \(0\) as \(\sigma \to 0\), the middle term converges to a positive constant and the right-hand side diverges to \(+\infty\).
The inequality \eqref{eq2} holds for sufficiently small $\sigma$  and thus the  system \eqref{eqs1} has a solution $(t,s)$ such that $t,s>0$, that is, $\mathcal{M}_2\neq\emptyset$.
\end{proof}

\begin{proof}[Proof of Theorem~\ref{thm1.5} when $p=2$]
Let $(u_{p_n})_{n\in\mathbb{N}}\subset H^1_V (\mathbb{R}^N)$ be a sequence of least energy nodal solution for the equation \eqref{eq1.1} with \(\frac{1}{p_n} > \frac{N - 2}{N + \alpha}\) and $p_n\searrow 2$ as \(n \to \infty\). In particular, we have
$J_{p_n}(u_{p_n})=c_{p_n}$, and the function $u_{p_n}$ satisfies the equation
\[
-\Delta u_{p_n}+Vu_{p_n}=\big(I_{\alpha}*\vert u_{p_n}\vert ^{p_n}\bigr)\vert u_{p_n}\vert ^{p_n-2}u_{p_n}.
\]
We first show that $\Vert u_{p_n}\Vert _V$ is bounded both from below and above.  In fact, by a direct computation, we see that for every \(n \in \mathbb{N}\), by the Hardy--Littlewood--Sobolev and by the Sobolev inequality,
\begin{equation*}
\Vert u_{p_n}\Vert _{V}^2
=\int_{\mathbb{R}^N} \big(I_\alpha*\vert u_{p_n}\vert ^{p_n}\bigr)\vert u_{p_n}\vert ^{p_n} \leq C\Big(\int_{\mathbb{R}^N}\vert u_{p_n}\vert ^{\frac{2Np_n}{N+\alpha}}\Big)^{\frac{N+\alpha}{N}}\leq C_1\Vert u_{p_n}\Vert _{V}^{2p_n},
\end{equation*}
where the constant $C_1$ can be taken independently of $p_n$ since $(p_n)_{n \in \mathbb{N}}$ remains bounded. It follows that
\begin{equation}
\label{eqNodalBounded0}
\liminf\limits_{n\to\infty}\Vert u_{p_n}\Vert _V>0.
\end{equation}
On the other hand, thanks to Lemma~\ref{lemmaNehariNotEmpty} above, we can take $w\in \mathcal{M}_2$, and define $w_{p_n}=t^{{1/p_n}}w^++s^{{1/p_n}}w^-$, where $(t^{{1/p_n}},s^{{1/p_n}})$ is given by Lemma~\ref{lem4.1}.
Then,
$
 w_{p_n}\in\mathcal{M}_{p_n},$ and $J(w_{p_n})\geq c_{p_n}$.
Since $\Phi_p(t,s)\to-\infty$ as $(t,s)\to+\infty$ uniformly in $p$ in bounded sets and $\Phi_p\to \Phi_2$ as $p\to 2$ uniformly over compact subsets of $[0,+\infty)^2$, we have
$
 t^{1/p_n},s^{1/p_n}\to 1,$ and therefore $J_{p_n}(w_{p_n})\to J_2(w).
$
Since $w$ is an arbitrary function in $\mathcal{M}_2$, we deduce that
 \begin{equation}\label{eq4.9}
 \limsup\limits_{n\to\infty} c_{p_n}\leq c_2 < +\infty,
 \end{equation}
and thus
\[
\Vert u_{p_n}\Vert _{V}^2=\frac{1}{\frac{1}{2}-\frac{1}{2p_n}}\Big(J_{p_n}(u_{p_n})-\frac{1}{2p_n} \langle J_{p_n}'(u_{p_n}),u_{p_n}\rangle\Big)=\frac{2p_nc_{p_n}}{p_n-1}\leq 4c_2 + o (1).
\]
In particular, $(\Vert u_{p_n}\Vert _V)_{n \in \mathbb{N}}$ is bounded from above. It follows that there exists some function $u \in H^1_V (\mathbb{R}^N)$ such that $u_{p_n}\rightharpoonup u$ weakly in $H^1_V (\mathbb{R}^N)$ as $n \to \infty$.
By the compactness of the embedding $H^1_V (\mathbb{R}^N)\hookrightarrow L^{\frac{4N}{N+\alpha}}(\mathbb{R}^N)$ (Proposition~\ref{embedding}), we have
\[
u_{p_n}\to u \text{ strongly in } L^\frac{4N}{N+\alpha}(\mathbb{R}^N) \qquad \text{ and } \qquad u_{p_n}\to u \text{ almost everywhere in }\mathbb{R}^N,
\]
so that $(u_{p_n})_{n\in\mathbb{N}}$ is bounded in $L^\frac{4N}{N+\alpha}(\mathbb{R}^N)$. Moreover, by interpolation through H\"ol\-der's inequality, we have
\begin{equation}\label{eq4.1.2}
\begin{split}
\Vert u_{p_n}\Vert _{L^{\frac{4N}{N+\alpha}}}
&\leq \Vert u_{p_n}\Vert _{L^{\frac{2Np_n}{N+\alpha}}}^{\lambda_n}\Vert u_{p_n}\Vert _{L^2}^{1-\lambda_n} \leq \Vert u_{p_n}\Vert _{L^{\frac{2Np_n}{N+\alpha}}}^{\lambda_n}\big(C\Vert u_{p_n}\Vert _{V}\bigr)^{1-\lambda_n},
\end{split}
\end{equation}
where $\lambda_n\in(0,1)$ satisfies that
\[
\frac{1}{\;\frac{4N}{N+\alpha}\;}=\frac{\lambda_n}{\; \frac{2Np_n}{N+\alpha}\;}+\frac{1-\lambda_n}{2},
\]
that is, $\lambda_n=\frac{N-\alpha}{Np_n-N-\alpha}\frac{p_n}{2}\to 1$ as $n\to\infty$ and the constant $C$ can be chosen independently of $u_{p_n}$.
Similarly, taking $q=\frac{2N}{N-2}$ for $N\geq3$, and $\frac{4N}{N+\alpha}<q<+\infty$ for $N=1,2$, we have
\begin{equation}\label{eq4.1.3}
\Vert u_{p_n}\Vert _{L^{\frac{2Np_n}{N+\alpha}}}
\leq \Vert u_{p_n}\Vert _{L^q}^{1-\mu_n} \Vert u_{p_n}\Vert _{L^{\frac{4N}{N+\alpha}}}^{\mu_n}\\
\leq \bigl(C\Vert  u_{p_n}\Vert _{V}\bigr)^{1-\mu_n}\Vert u_{p_n}\Vert _{L^{\frac{4N}{N+\alpha}}}^{\mu_n}
\end{equation}
with
\[
\mu_n=\frac{\frac{N+\alpha}{2Np_n}-\frac{1}{q}}{\frac{N+\alpha}{4N}-\frac{1}{q}}\to 1, \qquad \text{ as } n\to\infty.
\]
 Taking limit on the both sides of \eqref{eq4.1.2} and \eqref{eq4.1.3} and combining the boundedness of $\Vert u_{p_n}\Vert _V$ from below and above, we obtain that
\begin{equation*}
\lim\limits_{n\to\infty}\int_{\mathbb{R}^N}\vert u_{p_n}\vert ^\frac{2Np_n}{N+\alpha}=
\lim\limits_{n\to\infty}\int_{\mathbb{R}^N}\vert u_{p_n}\vert ^\frac{4N}{N+\alpha}
=\int_{\mathbb{R}^N}\vert u\vert ^\frac{4N}{N+\alpha}.
\end{equation*}
Thus, we get that (see for example \cite{W}*{Proposition 4.2.6})
\begin{equation}\label{eq.4.14}
\vert u_{p_n}\vert ^{p_n} \to \vert u\vert ^2\text { strongly in } L^{\frac{2N}{N+\alpha}}(\mathbb{R}^N),
\end{equation}
which, together with the  Stein--Weiss inequality \cite{SW}, yields that
\begin{equation}\label{eq.4.15}
I_\alpha*\vert u_{p_n}\vert ^{p_n}\to  I_\alpha*\vert u\vert ^2 \text{ strongly in } L^{\frac{2N}{N-\alpha}}(\mathbb{R}^N).
\end{equation}
Similarly to \eqref{eq4.4}, we have
\begin{equation*}
 C_1\Vert u_{p_n}\Vert _{L^{\frac{4N}{N+\alpha}}}^2\leq\Vert u_{p_n}\Vert _{V}^2
 =\int_{\mathbb{R}^N} \bigl(I_\alpha*\vert u_{p_n}\vert ^{p_n}\bigr)\vert u_{p_n}\vert ^{p_n}\leq C \big\Vert \vert u_{p_n}\vert ^{p_n}\big\Vert _{L^{\frac{2N}{N+\alpha}}}^{2},
\end{equation*}
which implies, by taking limit on both sides, that $\Vert u\Vert _{L^\frac{4N}{N+\alpha}}\geq C>0$, that is $u\neq 0$.

For large $n$, we choose $q$ as in \eqref{eq4.1.3}, we employ the interpolation inequalities again, and we get
\begin{equation*}
\Vert u_{p_n}\Vert _{L^{(p_n-1)\frac{4N}{N+\alpha}}}
\leq\Vert u_{p_n}\Vert _{L^q}^{1-\lambda_n}\Vert u_{p_n}\Vert _{L^{\frac{4N}{N+\alpha}}}^{\lambda_n}\\
\leq \big(C\Vert  u_{p_n}\Vert _{V}\bigr)^{1-\lambda_n}\Vert u_{p_n}\Vert _{L^{\frac{4N}{N+\alpha}}}^{\lambda_n},
\end{equation*}
where
\[
\lambda_n=\frac{\frac{N+\alpha}{4N(p_n-1)}-\frac{1}{q}}{\frac{N+\alpha}{4N}-\frac{1}{q}}\to 1,
\] that is
$ \vert u_{p_n}\vert ^{p_n-2}u_{p_n}$ is bounded in $L^{\frac{4N}{N+\alpha}}(\mathbb{R}^N)$, it converges to $u$ weakly
in the space $L^{\frac{4N}{N+\alpha}}(\mathbb{R}^N)$ \cite{W2}*{Proposition 5.4.7}.
Therefore,
\begin{equation*}
\begin{split}
\langle J_{2}'(u),\psi\rangle
&=\int_{\mathbb{R}^N} \nabla u\cdot\nabla \psi+Vu\psi- (I_{\alpha}*\vert u\vert ^2)u\psi\\
&= \lim\limits_{n\to\infty} \int_{\mathbb{R}^N} \nabla u_{p_n}\cdot\nabla \psi+Vu_{p_n}\psi- (I_{\alpha}*\vert u_{p_n}\vert ^{p_n})\vert u_{p_n}\vert ^{p_n-2}u_{p_n}\psi\\
&= \langle J_{p_n}'(u_{p_n}),\psi\rangle=0.
\end{split}
\end{equation*}
which means that $u$ is a weak solution of the quadratic Choquard equation \eqref{eq1.1} since the function $\psi\in H^{1}_V(\mathbb{R}^N)$ is arbitrary.
Moreover, we deduce from the convergences \eqref{eq.4.14} and \eqref{eq.4.15} that
\begin{equation}
\label{eqNodalConvergenceNorms}
  \Vert u_{p_n}\Vert _{V}^2=\int_{\mathbb{R}^N}(I_\alpha*\vert u_{p_n}\vert ^{p_n})\vert u_{p_n}\vert ^{p_n}\to \int_{\mathbb{R}^N}(I_\alpha*\vert u\vert ^{2})\vert u\vert ^{2}=\Vert u\Vert _{V}^2,
\end{equation}
and thus $u_{p_n}\to u$ strongly in \(H^1_V (\mathbb{R}^N)\) as \(n \to \infty\).

We are now in a position to finish our proof by showing that $u^\pm \neq 0$ and $J_2(u)$ is the least among all the nodal solutions of the quadratic Choquard equation.
By \eqref{eqNodalBounded0} and \eqref{eqNodalConvergenceNorms}, we have \(u \ne 0\).
Without loss of generality, let us assume by contradiction that $ u^{+}\neq 0$ and that \(u^- = 0\). Set for each \(n \in \mathbb{N}\)
\[
v_{p_n}:=\frac{u_{p_n}^{-}}{\Vert u_{p_n}^{-}\Vert _{V}^{{2/p_n}}},
\]
then from the equality $\langle J_{p_n}'(u_{p_n}),u^{-}_{p_n}\rangle=0$, we get that for each \(n \in \mathbb{N}\)
\begin{equation*}
\int_{\mathbb{R}^N} \bigl(I_\alpha*\vert u_{p_n}\vert ^{p_n}\bigr)\vert v_{p_n}\vert ^{p_n}=1.
\end{equation*}
Since we have assumed that $u^{-}=0$, we have $u_{p_n}^{-}\to 0$ strongly in \(H^1_V (\mathbb{R}^N)\). By Young's inequality, we know that
\[
\Vert v_{p_n}\Vert _V=\Vert u_{p_n}^-\Vert _{V}^{1-\frac{2}{p_n}}\leq\Bigl(1-\frac{2}{p_n}\Bigr)\Vert u^{-}_{p_n}\Vert _V+\frac{2}{p_n},
\]
which yields that the sequence $(v_{p_n})_{n \in \mathbb{N}}$ is bounded in \(H^1_V (\mathbb{R}^N)\). The compactness of the embedding $H^1_V (\mathbb{R}^N)\hookrightarrow L^{\frac{4N}{N+\alpha}}(\mathbb{R}^N)$ (Proposition~\ref{embedding}) implies in turn that $(v_{p_n})_{n \in \mathbb{N}}$ converges to some $v$ strongly in
$L^{\frac{4N}{N+\alpha}}(\mathbb{R}^N)$, which, together with  \eqref{eq4.1.3} by replacing $u_{p_n}$ with $v_{p_n}$ and the boundness of $\Vert v_{p_n}\Vert _V$, implies that
\[
\limsup_{n \to \infty} \int_{\mathbb{R}^N}\bigl\vert \vert v_{p_n}\vert ^{p_n}\bigr\vert ^{\frac{2N}{N+\alpha}} < +\infty,
\]
thus, it follows from  \cite{W2}*{Proposition 5.4.7} again that
\[
\vert v_{p_n}\vert ^{p_n}\rightharpoonup \vert v\vert ^2 \text{ in } L^{\frac{2N}{N+\alpha}}(\mathbb{R}^N).
\]
Combining the strong convergence of \eqref{eq.4.15}, we deduce that
 \begin{equation}
 \label{eqNodalRieszuv}
 \int_{\mathbb{R}^N} \bigl(I_\alpha*\vert u\vert ^2\bigr)\vert v\vert ^{2}=\lim\limits_{n\to\infty}\int_{\mathbb{R}^N} \bigl(I_\alpha*\vert u_{p_n}\vert ^{p_n}\bigr)\vert v_{p_n}\vert ^{p_n}=1.
 \end{equation}
On the other hand, by the definition of $v_{p_n}$ and by the strong convergence of \((u_{p_n}^+)_{n \in \mathbb{N}}\) to \(u\) in \(H^1 (\mathbb{R}^N)\),
we have $uv=0$ almost everywhere on \(\mathbb{R}^N\). Since $u$ is a nontrivial nonnegative weak solution to the Choquard equation, it is a classical solution (following \cite{MorozVanSchaftingen2013JFA}*{Theorem 3}) and thus, by the classical strong maximum principle for second order elliptic operators, $u > 0$ everywhere on $\mathbb{R}^N$ and thus $v=0$, which is
a contradiction with \eqref{eqNodalRieszuv}.

In particular, we have \(u \in \mathcal{M}_2\), and thus \(J_2 (u) \ge c_2\).
On the other hand, by \eqref{eq4.9} and the strong convergence of \((u_{p_n})_{n \in \mathbb{N}}\)  we have
\begin{equation*}
\begin{split}
J_2(u)
&=\frac{1}{4}\int_{\mathbb{R}^N}\vert \nabla u\vert ^2+V\vert u \vert^2 \\
&= \lim\limits_{n\to\infty} \Bigl(\frac{1}{2}-\frac{1}{2p_n}\Bigr)\int_{\mathbb{R}^N}\vert \nabla u_{p_n}\vert ^2+V\vert u_{p_n}\vert^2
= \limsup_{n \to \infty}  J_{p_n}(u_{p_n})=\limsup_{n \to \infty} c_{p_n}\leq c_2;
\end{split}
\end{equation*}
this concludes the proof.
\end{proof}
\begin{remark}\label{rem4.3}
In fact, in the case of $p\geq 2$, we have $c_p>c_{0,p}$ where $c_{0,p}$ is the energy level of the groundstates, since any groundstate solution should have constant sign.
However, the question of whether or not or when the estimate $c_p>2c_{0,p}$ holds is open;
in the case of constant potential this estimate was crucial for the compactness.
\end{remark}

Finally, we prove that the energy level \(c_p\) is degenerate when \(p < 2\).

\begin{proposition}\label{prop4}
For $p<2$, we have $c_p=c_{0,p}$. Then the energy functional $J_p$ does not achieve its minimum on
the Nehari nodal set.
\end{proposition}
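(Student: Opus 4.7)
The plan is to establish $c_p = c_{0,p}$ by matching inequalities, then deduce non-achievement by a maximum-principle argument.

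The lower bound $c_p \ge c_{0,p}$ is immediate: summing the two Nehari nodal conditions gives $\langle J_{p}'(u), u\rangle = 0$ for every $u \in \mathcal{M}_p$, so $u$ belongs to the Nehari manifold $\mathcal{N}_p := \{w \in H^1_V(\mathbb{R}^N)\setminus \{0\} \st \langle J_{p}'(w), w\rangle = 0\}$, and the rescaling argument in the proof of Theorem~\ref{thm1.2} gives $\inf_{\mathcal{N}_p} J_p = c_{0,p}$.

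For the reverse inequality I would construct a sequence in $\mathcal{M}_p$ with energy converging to $c_{0,p}$. Let $u_0 > 0$ be a groundstate (Theorem~\ref{thm1.2}), take a nonnegative, nontrivial $\varphi \in C_c^\infty(\mathbb{R}^N)$ with $\supp \varphi \subset B(0,1)$, and choose $\xi_n \in \mathbb{R}^N$ with $\vert \xi_n \vert \to +\infty$; set $\varphi_n = \varphi(\cdot - \xi_n)$ and seek parameters $(t_n, s_n) \in (0, +\infty)^2$ such that $v_n := t_n u_0 - s_n \varphi_n \in \mathcal{M}_p$. For $n$ large the negative part of $v_n$ is concentrated near $\xi_n$, the positive part is essentially $t_n u_0$, and the two Nehari nodal equations take the asymptotic form
\begin{equation*}
t_n^2 (1 - t_n^{2p-2}) I_0 \approx t_n^p s_n^p K_n, \qquad s_n^{2-p} M_n \approx t_n^p K_n + s_n^p I_\varphi,
\end{equation*}
where $I_0 := \Vert u_0\Vert _V^2$, $I_\varphi := \int_{\mathbb{R}^N}(I_\alpha \ast \vert \varphi\vert ^p)\vert \varphi\vert ^p$, $M_n := \Vert \varphi_n\Vert _V^2 \to +\infty$ by confinement, and $K_n := \int_{\mathbb{R}^N} (I_\alpha \ast \vert u_0\vert ^p) \vert \varphi_n\vert ^p \to 0$ by spatial separation and decay of the Riesz kernel. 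The dominant balance in the second equation, which is possible precisely because $p < 2$, yields $s_n \asymp (K_n / M_n)^{1/(2-p)} \to 0$, and the first equation then forces $t_n \to 1$. The rigorous existence of $(t_n, s_n)$ with $v_n^\pm \neq 0$ would be established by a Brouwer degree argument on a small rectangle around $(1, 0^+)$, modeled on Step~2 of the proof of Theorem~\ref{thm1.5}; a choice of $\xi_n$ along which the pointwise decay of the classical solution $u_0$ beats $s_n$ ensures $v_n^- \not\equiv 0$. Since $\Vert v_n^-\Vert _V \to 0$ and $v_n^+ \to u_0$ in $H^1_V(\mathbb{R}^N)$,
\begin{equation*}
J_p(v_n) = \Bigl(\frac{1}{2} - \frac{1}{2p}\Bigr) \Vert v_n\Vert _V^2 \longrightarrow \Bigl(\frac{1}{2} - \frac{1}{2p}\Bigr) I_0 = c_{0,p},
\end{equation*}
so $c_p \le c_{0,p}$.

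Finally, suppose that some $w \in \mathcal{M}_p$ attained $J_p(w) = c_p = c_{0,p}$: then $w$ would achieve the infimum of $J_p$ on the larger set $\mathcal{N}_p$ and hence be a groundstate of \eqref{eq1.1}; since the functional is invariant under $w \mapsto \vert w \vert$, the function $\vert w \vert$ would also be a nonnegative groundstate, classical by elliptic regularity, and the strong maximum principle applied to the equation satisfied by $\vert w \vert$ would give $\vert w \vert > 0$ everywhere, contradicting $w^\pm \ne 0$. The main technical obstacle is the construction of $(t_n, s_n)$: unlike the case $p > 2$ of Lemma~\ref{lem4.1}, for $p < 2$ the boundary $\{s = 0\}$ is no longer repulsive for the nodal negative-part constraint, and the simultaneous balancing of the two small quantities $K_n$ and $M_n^{-1}$ while guaranteeing that $v_n$ genuinely changes sign needs care.
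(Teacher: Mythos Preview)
Your lower bound and your final non-achievement argument are essentially the same as the paper's. The difference lies in the construction for the upper bound $c_p \le c_{0,p}$, and here your proposal has a genuine gap that you yourself flag at the end.

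The difficulty comes from using the groundstate $u_0$ itself. Since $u_0$ is a classical solution and strictly positive on all of $\mathbb{R}^N$ (by the very maximum principle you invoke later), the supports of $u_0$ and $\varphi_n$ overlap for every $n$. Hence $v_n^+$ is \emph{not} $t_n u_0$ and $v_n^-$ is \emph{not} $-s_n\varphi_n$; the two Nehari nodal constraints $\langle J_p'(v_n),v_n^\pm\rangle=0$ are therefore not the clean asymptotic system you wrote down, and the error terms depend on the unknown zero set of $v_n$. Moreover, to ensure $v_n^-\ne 0$ you need $t_n u_0(\xi_n+y)<s_n\varphi(y)$ somewhere, which forces a quantitative comparison between the decay of $u_0$ and the rate $s_n\asymp(K_n/M_n)^{1/(2-p)}$; both sides depend on the potential $V$ through different mechanisms, and you give no argument that such $\xi_n$ exist. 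The Brouwer degree step you mention would have to be carried out on a map whose components mix $v_n^+$ and $v_n^-$ in a way you do not control.

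The paper avoids all of this by a density argument: it replaces $u_0$ by an arbitrary nonnegative $u\in\mathcal{N}_p\cap C^1_c(\mathbb{R}^N)$, places a bump $\psi$ at a point $a\notin\operatorname{supp} u$, and scales it down as $u_\sigma(x)=u(x)-\sigma^{2/(2-p)}\psi\bigl(\tfrac{x-a}{\sigma}\bigr)$. Compact support gives exactly $u_\sigma^+=u$ and $u_\sigma^-=-\sigma^{2/(2-p)}\psi(\tfrac{\cdot-a}{\sigma})$ for small $\sigma$, so the Nehari nodal system is explicit; it has a unique solution at $\sigma=0$, and the implicit function theorem produces $(t_\sigma,s_\sigma)$ with $t_\sigma\to1$ and $s_\sigma$ bounded. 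The choice of exponent $2/(2-p)$ makes the negative part vanish in $H^1_V$, and one concludes $c_p\le J_p(u)$ for every such $u$, hence $c_p\le c_{0,p}$. The key idea you are missing is that working with compactly supported competitors on the Nehari manifold lets you separate positive and negative parts \emph{exactly}, turning a delicate perturbation problem into a routine application of the implicit function theorem.
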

\begin{proof}
We observe that if $u\in\mathcal{N}_p$, then $\vert u\vert \in \mathcal{N}_p$, where $\mathcal{N}_p$ denote the Nehari manifold, that is,
\[
\mathcal{N}_p:=\bigl\{u\in H^{1}_V(\mathbb{R}^N): u\neq 0 \text{ and } \langle J'_{p}(u),u\rangle=0\bigr\}.
\]
With this notation, we know that $c_{0,p}=\inf_{u\in\mathcal{N}_p} J_p(u)$ (see Remark~\ref{rem4.3} and \cite{W}). Since $\mathcal{M}_p\subset\mathcal{N}_p$, thus we get that  $c_p\geq c_{0,p}$.
 In fact, we shall show the reverse inequality holds. By a density argument, it follows that
\[
c_{0,p}=\inf\{J_p(u): u\in \mathcal{N}_p\cap C^{1}_{0}(\mathbb{R}^N) \text{ and } u\geq 0 \text{ on }\mathbb{R}^N\}.
\]
Let $u\in \mathcal{N}_p\cap C^{1}_{c}(\mathbb{R}^N)$ and $u\geq0$ on $\mathbb{R}^N$. We choose a point $a\not\in$ supp $u$ and a function
$\psi\in C^{1}_{c}(\mathbb{R}^N)\backslash\{0\}$ such that $\psi\geq 0$ and we define as in \cites{GV} for each $\sigma>0$ the function $u_\sigma: \mathbb{R}^N\to \mathbb{R}$ by
\[
u_\sigma(x)=u(x)-\sigma^{\frac{2}{2-p}}\psi(\tfrac{x-a}{\sigma}).
\]
Then, $u_{\sigma}^{+}=u$ for sufficiently small $\sigma$. By a direct computation, $tu_{\sigma}^{+}+su_{\sigma}^{-}\in\mathcal{M}_p$ if and only if
\begin{equation}\label{eq4.12}
\left\{
 \begin{aligned}
(t^{2-p}-t^p)\int_{\mathbb{R}^N}\bigl(I_\alpha \ast \vert u\vert ^p\bigr)\vert u\vert ^p &=s^p\sigma^{N+\frac{2p}{2-p}}K_\sigma,\\
s^{2-p}\int_{\mathbb{R}^N}\vert \nabla \psi(y)\vert ^2+\sigma^2V(a+\sigma y)\psi^2(y)\dif y&=t^pK_\sigma+s^p\sigma^{\alpha+\frac{2p}{2-p}}\int_{\mathbb{R}^N}\bigl(I_\alpha *\vert \psi\vert ^p\bigr)\vert \psi\vert ^p
\end{aligned}\right.
\end{equation}
where $K_\sigma=\int_{\mathbb{R}^N}\bigl(I_\alpha \ast \vert u\vert ^p\bigr)(a+\sigma y)\vert \psi(y)\vert ^pdy$.

Observe that thet system \eqref{eq4.12} has a unique solution when $\sigma=0$. By the implicit function theorem, for $\sigma>0$ small enough there exists a pair $(t_\sigma,s_\sigma)\in(0,+\infty)^2$ that solves the system \eqref{eq4.12} and
\[
\lim\limits_{\sigma\to 0}t_\sigma=1,\qquad \lim\limits_{\sigma\to 0}s_\sigma=\bigg(\frac{\bigl(I_\alpha \ast \vert u\vert ^p\bigr)(a)\int_{\mathbb{R}^N}\vert \psi\vert ^p }{\int_{\mathbb{R}^N}\vert \nabla \psi\vert ^2}\bigg)^{\frac{1}{2-p}}.
\]
Since $\frac{4}{2-p}+N-2>0$, we have $u_{\sigma}^-\to 0$ in \(H^1_V (\mathbb{R}^N)\) as $\sigma\to 0$, and thus $t_\sigma u_{\sigma}^{+}+s_{\sigma}u_{\sigma}^{-}\to u$  in \(H^1_V (\mathbb{R}^N)\). Hence, we have that
\[
c_p\leq J_p(t_\sigma u_{\sigma}^{+}+s_{\sigma}u_{\sigma}^{-})\to J_p(u), \qquad \text{ as } \sigma\to 0,
\]
which implies that $c_p\leq c_{0,p}$ since $u\in \mathcal{N}_p$ is arbitrary.

We assume that $u\in \mathcal{M}_p$ minimizes the functional $J_p$ on the Nehari nodal set $\mathcal{M}_p$. Since $c_{0,p}=c_p$, thus $u$ also minimizes $J_p$ on the  Nehari manifold $\mathcal{N}_p$. By regularity theory for the Choquard equation and by the strong maximum principle, either $u>0$ or $u<0$, which contradicts with $u\in\mathcal{M}_p$.
\end{proof}

\section{Poho\v{z}aev identity}
\label{sectionPohozaev}

This section is devoted to the proof of a Poho\v{z}aev identity for the Choquard equation \eqref{eq1.1}.

\begin{theorem}[Poho\v{z}aev identity]\label{thm5.1}
Let $N\geq 3$, $V\in C^1(\mathbb{R}^N,[0,+\infty))$. If the function $u\in W^{2,2}_{\mathrm{loc}}(\mathbb{R}^N)\cap H^{1}_{V}(\mathbb{R}^N)$ is a solution to the Choquard equation \eqref{eq1.1} such that
\[
\int_{\mathbb{R}^N} \vert x\cdot \nabla V(x)\vert \,\vert u (x) \vert^2 \dif x+\int_{\mathbb{R}^N}\bigl(I_\alpha \ast \vert u\vert ^p\bigr)\vert u\vert ^p <+\infty,
\]
then
\begin{equation}\label{eqph}
\frac{N-2}{2}\int_{\mathbb{R}^N} \vert \nabla u\vert ^2 +\frac{1}{2}\int_{\mathbb{R}^N} \big(NV(x)+x\cdot\nabla V(x)\bigr) \vert u (x) \vert^2\dif x
=\frac{N+\alpha}{2p}\int \bigl(I_\alpha \ast \vert u\vert ^p\bigr)\vert u\vert ^p .
\end{equation}
\end{theorem}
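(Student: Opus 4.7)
The plan is to proceed via the classical Pohožaev computation: test the equation against $x \cdot \nabla u$, integrate by parts, and identify the three characteristic pieces. Since $u$ is only assumed in $W^{2,2}_{\mathrm{loc}} \cap H^1_V$, I cannot work directly on all of $\mathbb{R}^N$; instead I would fix a smooth radial cutoff $\eta \in C^\infty_c(\mathbb{R}^N)$ with $\eta = 1$ on $B(0,1)$ and supported in $B(0,2)$, set $\eta_R(x) = \eta(x/R)$, and test the equation \eqref{eq1.1} against $\eta_R(x)\, (x \cdot \nabla u)$. This test function is only in $W^{1,2}$ with compact support, which by a density/regularization argument is legitimate since $u \in W^{2,2}_{\mathrm{loc}}$.

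For the Laplacian contribution $-\int \Delta u \, \eta_R (x \cdot \nabla u)$, the standard identity $\nabla u \cdot \nabla(x \cdot \nabla u) = |\nabla u|^2 + \tfrac{1}{2} x \cdot \nabla |\nabla u|^2$ combined with one more integration by parts produces $\tfrac{N-2}{2} \int \eta_R |\nabla u|^2$ plus remainder terms involving $\nabla \eta_R$, each of which contains a factor $|\nabla u|^2$ localized on the annulus $R \le |x| \le 2R$. For the potential term, writing $u(x \cdot \nabla u) = \tfrac{1}{2} x \cdot \nabla u^2$ and integrating by parts produces $\tfrac{1}{2}\int \eta_R (NV + x \cdot \nabla V)\, u^2$ plus a remainder in $\nabla \eta_R$ localized on the same annulus. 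For the nonlocal right-hand side, the key observation is that $|u|^{p-2}u \cdot \nabla u = \tfrac{1}{p}\nabla |u|^p$, so the integral becomes $\tfrac{1}{p} \int (I_\alpha * |u|^p)\, \eta_R\, x \cdot \nabla |u|^p$; here I would again integrate by parts and then use Fubini together with the symmetry $\nabla_x |x-y|^{\alpha-N} = -\nabla_y |x-y|^{\alpha-N}$ of the Riesz kernel, which, after symmetrizing $(x,y) \leftrightarrow (y,x)$, produces the coefficient $-\tfrac{N+\alpha}{2p} \int (I_\alpha * |u|^p)|u|^p$, up to a remainder involving $\nabla \eta_R$ and a cross term $(x + y)$ that vanishes by the symmetry after symmetrization.

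The main obstacle is controlling all the cutoff remainders as $R \to \infty$. Each remainder on the local side is pointwise bounded by $\tfrac{C}{R}|x|(|\nabla u|^2 + V u^2)$ times $\mathbf{1}_{R \le |x| \le 2R}$, and for the nonlocal side by the Hardy--Littlewood--Sobolev / Stein--Weiss type estimates applied to the same localization; since $u \in H^1_V$ and $\int (I_\alpha * |u|^p)|u|^p < +\infty$ by assumption, each of the quantities $|\nabla u|^2$, $V u^2$, $|x \cdot \nabla V| u^2$, and $(I_\alpha * |u|^p)|u|^p$ is integrable on $\mathbb{R}^N$. Consequently one can select a sequence $R_n \to +\infty$ (via Fubini on spherical shells, in the spirit of the coarea formula) along which the remainders tend to zero; passage to the limit in the three principal terms is then by dominated convergence. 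Assembling the three limits and multiplying through by $-1$ yields precisely the identity \eqref{eqph}.

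The subtlety I expect is the justification of the symmetrization for the nonlocal term on a bounded window: the cut-off $\eta_R$ breaks the symmetry in $(x,y)$, so I would first write $\int \eta_R (x \cdot \nabla_x) G(x,y)$ with $G(x,y) = A_\alpha |u(x)|^p |u(y)|^p |x-y|^{\alpha-N}$, symmetrize to split it as $\tfrac{1}{2}\iint (\eta_R(x) x + \eta_R(y) y) \cdot \nabla_x G$, and identify the symmetric principal part $\tfrac{1}{2}(\eta_R(x) + \eta_R(y))(x-y) \cdot \nabla_x G$ — which yields the $-\tfrac{N+\alpha}{2}$ factor via $(x-y) \cdot \nabla_x |x-y|^{\alpha-N} = (\alpha - N)|x-y|^{\alpha-N}$ — from the commutator $\tfrac{1}{2}(\eta_R(x) - \eta_R(y))(x+y) \cdot \nabla_x G$, which must be shown to vanish as $R \to \infty$ using that $|\eta_R(x) - \eta_R(y)| \leq C|x-y|/R$ on the support of $\nabla \eta_R$ together with the Stein--Weiss inequality.
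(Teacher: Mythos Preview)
Your approach is essentially identical to the paper's: both test the equation against a cut-off multiplier \(\varphi(\lambda x)\,x\cdot\nabla u\) (with \(\lambda=1/R\)), integrate by parts in each of the three pieces, and pass to the limit by dominated convergence. Two minor simplifications relative to your sketch: no subsequence extraction is needed, since on the support of \(\nabla\eta_R\) one has \(|x|/R\le 2\), so all local remainder terms are dominated by fixed integrable functions and vanish as \(R\to\infty\); and for the nonlocal piece the paper avoids your commutator-plus-Stein--Weiss argument by observing directly that \(\bigl|(x-y)\cdot(x\varphi(\lambda x)-y\varphi(\lambda y))\bigr|/|x-y|^2\) is bounded uniformly in \(\lambda\) (it equals the same expression with \(\lambda x,\lambda y\) in place of \(x,y\), hence is controlled by the Lipschitz constant of \(z\mapsto z\varphi(z)\)), so dominated convergence applies immediately with dominating function a constant times \(I_\alpha(x-y)|u(x)|^p|u(y)|^p\).
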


Here, $\eta \cdot \zeta$ denotes the canonical scalar product of vectors $\eta, \zeta \in \mathbb{R}^N$.

\begin{proof}
[Proof of Theorem~\ref{thm5.1}]
We take $\varphi\in C^{1}_{c}(\mathbb{R}^N)$ such that $\varphi=1$ on $B(0,1)$.
Since the function $\varphi$ has compact support, we can define a function $v_\lambda\in H_{V}^{1}(\mathbb{R}^N)$ for $\lambda\in(0,+\infty)$ by
\[
v_\lambda(x):=\varphi(\lambda x)\,x\cdot\nabla u(x).
\]
By testing the Choquard equation \eqref{eq1.1} against the function
$v_\lambda$, we have
\[
 \int_{\mathbb{R}^N}\nabla u\cdot \nabla v_\lambda+V uv_\lambda =\int_{\mathbb{R}^N}\bigl(I_{\alpha}*\vert u\vert ^p\bigr)\vert u\vert ^{p-2}uv_\lambda.
\]
We compute the square term for $\lambda>0$. By the definition of $v_\lambda$, the chain rule and by the Gauss integral formula, we get that
\begin{equation}
\begin{split}
\int_{\mathbb{R}^N}Vuv_\lambda
&=\int_{\mathbb{R}^N} V(x)u(x)\varphi(\lambda x)\,x\cdot\nabla u(x)\dif x\\
&=\int_{\mathbb{R}^N}V(x)\varphi(\lambda x)\,x\cdot\nabla\big(\tfrac{1}{2}\vert u\vert ^2\bigr)(x) \dif x\\
&=-\int_{\mathbb{R}^N} \bigl(NV(x)\varphi(\lambda x)+ V(x)\;(\lambda x)\cdot\nabla\varphi(\lambda x)+ x\cdot\nabla V(x)\varphi(\lambda x)\bigr) \frac{\vert u(x)\vert ^2}{2}\dif x.
\end{split}
\end{equation}
In view of the various boundedness assumptions, Lebesgue's dominated convergence theorem applies and gives us
\[
\lim\limits_{\lambda\to 0}\int_{\mathbb{R}^N} V(x)uv_\lambda =-\frac{1}{2}\int_{\mathbb{R}^N} \bigl(NV(x)+x\cdot\nabla V(x)\bigr) \vert u (x) \vert^2\dif x
\]
In view of the assumption $u\in W^{2,2}_{\mathrm{loc}}(\mathbb{R}^N)$, we can perform an integration by parts
 \begin{equation*}
 \begin{split}
 \int_{\mathbb{R}^N}\nabla u\cdot \nabla v_\lambda
 &=\int_{\mathbb{R}^N}\varphi(\lambda x)\Bigl(\vert \nabla u(x)\vert ^2+x\cdot\nabla\big(\tfrac{\vert \nabla u\vert ^2}{2}\bigr)(x)\Bigr) \dif x\\
 &\qquad\qquad\qquad +\int_{\mathbb{R}^N}\big(\lambda \nabla u(x)\cdot \nabla\varphi(\lambda x)\bigr)\big(x\cdot\nabla u(x)\bigr) \dif x\\
 &=-\int_{\mathbb{R}^N}\bigl((N-2)\varphi(\lambda x)+\lambda x\cdot\nabla\varphi(\lambda x)\bigr)\frac{\vert \nabla u(x)\vert ^2}{2}\dif x\\
 &\qquad\qquad\qquad +\int_{\mathbb{R}^N}\big( \nabla u(x)\cdot \nabla\varphi(\lambda x)\bigr)\big(\lambda x\cdot\nabla u(x)\bigr) \dif x
 \end{split}
 \end{equation*}
Since $\vert (\eta\cdot\zeta)(\eta\cdot\xi)\vert \leq \vert \eta\vert ^2\vert \zeta\vert \vert \xi\vert $ for any $\zeta,\xi,\eta\in\mathbb{R}^N$, we have for each \(x \in \mathbb{R}^N\)
 \[
 \vert \big(\nabla u(x)\cdot \nabla\varphi(\lambda x)\bigr)\big(\lambda  x\cdot\nabla u(x)\bigr)\vert \leq \vert \nabla u(x)\vert ^2\vert \lambda x\vert  \vert \nabla \varphi(\lambda x)\vert \leq \vert \nabla u(x)\vert ^2 \sup_{z \in \mathbb{R}^N} \vert z \vert \, \vert \nabla \varphi (z)\vert.
 \]
 By Lebesgue's dominated convergence theorem again, we have, since $u\in H^1_{V}(\mathbb{R}^N)$,
 \[
 \lim\limits_{\lambda\to 0}\int_{\mathbb{R}^N}\nabla u\cdot \nabla v_\lambda =-\frac{N-2}{2}\int_{\mathbb{R}^N}\vert \nabla u\vert ^2.
 \]
Finally, by symmetry and integration by parts
\begin{equation*}
\begin{split}
\int_{\mathbb{R}^N}&\bigl(I_\alpha \ast \vert u\vert ^p\bigr)\vert u\vert ^{p-2}uv_\lambda \\
&=\int_{\mathbb{R}^N}\int_{\mathbb{R}^N} I_\alpha(x-y)\vert u(y)\vert ^p\varphi(\lambda x)\,x\cdot\nabla\big(\tfrac{\vert u\vert ^p}{p}\bigr)(x) \dif x \dif y\\
&=\frac{1}{2}\int_{\mathbb{R}^N}\int_{\mathbb{R}^N} I_\alpha(x-y)\Big(\vert u(y)\vert ^p \varphi(\lambda x)\,x\cdot\nabla\big(\tfrac{\vert u\vert ^p}{p}\bigr)(x)\\
&\qquad \qquad \qquad \qquad\qquad \qquad \qquad \qquad +\vert u(x)\vert ^p\varphi(\lambda y)y\cdot\nabla\big(\tfrac{\vert u\vert ^p}{p}\bigr)(y)\Big) \dif x \dif y\\
&=-\int_{\mathbb{R}^N}\int_{\mathbb{R}^N}I_\alpha(x-y)\vert u(y)\vert ^p\big( N\varphi(\lambda x)+\lambda x\cdot\nabla\varphi(\lambda x)\bigr)\frac{\vert u(x)\vert ^p}{p}\dif x \dif y\\
&\qquad+\frac{N-\alpha}{2p}\int_{\mathbb{R}^N}\int_{\mathbb{R}^N}I_\alpha(x-y)\vert u(y)\vert ^p\frac{(x-y)\cdot(x\varphi(\lambda x)-y\varphi(\lambda y))}{\vert x-y\vert ^2}\vert u(x)\vert ^p \dif x \dif y.
\end{split}
\end{equation*}
For any $\lambda>0$ and \(x, y \in \mathbb{R}^N\),
\begin{equation}
\begin{split}
\Big\vert \frac{(x-y)\cdot(x\varphi(\lambda x)-y\varphi(\lambda y))}{\vert x-y\vert ^2}\Big\vert 
&=\Big\vert \frac{ (\lambda x-\lambda y)\cdot(\lambda x\varphi(\lambda x)-\lambda y\varphi(\lambda y))}{\vert \lambda x-\lambda y\vert ^2}\Big\vert \\
&\le \sup_{z, w \in \mathbb{R^N}} \Big\vert \frac{ (w-z)\cdot(w\varphi(w)-z\varphi(z))}{\vert w - z\vert ^2}\Big\vert 
< +\infty.
\end{split}
\end{equation}
We can thus apply Lebesgue's dominated convergence theorem to conclude that
\[
\lim\limits_{\lambda\to 0}\int_{\mathbb{R}^N}\bigl(I_\alpha \ast \vert u\vert ^p\bigr)\vert u\vert ^{p-2}uv_\lambda=-\frac{N+\alpha}{2p}\int_{\mathbb{R}^N}\bigl(I_\alpha \ast \vert u\vert ^p\bigr)\vert u\vert ^{p}.
\]
Hence, the identity \eqref{eqph} holds.
\end{proof}

\begin{remark} The Poho\v{z}aev identity implies some nonexistence results for the Choquard equation \eqref{eq1.1}. In general, if
\begin{gather}
  \text{either}\qquad \big(2V(x)+x\cdot\nabla V(x)\bigr)\big(N-2-\tfrac{N+\alpha}{p}\bigr)\geq0,\\
  \text{or}\qquad \big((N-\tfrac{N+\alpha}{p})V(x)+x\cdot\nabla V(x)\bigr)\big(N-2-\tfrac{N+\alpha}{p}\bigr)\geq0, 
\end{gather}
then the Choquard equation \eqref{eq1.1} has no nontrivial solutions satisfying the regularity and boundedness assumptions of Theorem~\ref{thm5.1}. In particular, if $V(x)=\vert x\vert ^\beta$ is homogeneous, then the Choquard equation \eqref{eq1.1} has no such solution if
$p\in(1,\max\{1,\frac{N+\alpha}{N+\beta}\}]\cup[\frac{N+\alpha}{N-2},+\infty)$.
\end{remark}

\section*{Acknowledgement}

Jean Van Schaftingen was supported by the Projet de Recherche (Fonds de la Recherche Scientifique--FNRS) T.1110.14 ``Existence and asymptotic behavior of solutions to systems of semilinear elliptic partial differential equations''.
Jiankang Xia acknowledges the support of the China Scholarship Council and the hospitality the Universit\'e catholique de Louvain (Institut de Recherche en Math\'ematique et en Physique).

\begin{bibdiv}
\begin{biblist}
\bib{AS}{article}{
   author={Alves, Claudianor O.},
   author={Souto, Marco A. S.},
   title={Existence of least energy nodal solution for a
   Schr\"odinger-Poisson system in bounded domains},
   journal={Z. Angew. Math. Phys.},
   volume={65},
   date={2014},
   number={6},
   pages={1153--1166},
   issn={0044-2275},
}
\bib{B}{article}{
   author={Bartsch, Thomas},
   title={Infinitely many solutions of a symmetric Dirichlet problem},
   journal={Nonlinear Anal.},
   volume={20},
   date={1993},
   number={10},
   pages={1205--1216},
   issn={0362-546X},
}

\bib{CaoWangZou}{article}{
   author={Cao, Pei},
   author={Wang, Jing},
   author={Zou, Wenming},
   title={On the standing waves for nonlinear Hartree equation with
   confining potential},
   journal={J. Math. Phys.},
   volume={53},
   date={2012},
   number={3},
   pages={033702, 27},
   issn={0022-2488},
}

\bib{CCN}{article}{
   author={Castro, Alfonso},
   author={Cossio, Jorge},
   author={Neuberger, John M.},
   title={A sign-changing solution for a superlinear Dirichlet problem},
   journal={Rocky Mountain J. Math.},
   volume={27},
   date={1997},
   number={4},
   pages={1041--1053},
   issn={0035-7596},
}

\bib{CSS}{article}{
   author={Cerami, G.},
   author={Solimini, S.},
   author={Struwe, M.},
   title={Some existence results for superlinear elliptic boundary value
   problems involving critical exponents},
   journal={J. Funct. Anal.},
   volume={69},
   date={1986},
   number={3},
   pages={289--306},
   issn={0022-1236},
}

\bib{Diosi1984}{article}{
   title={Gravitation and quantum-mechanical localization of macro-objects},
   author={Di\'osi, L.},
   journal={Phys. Lett. A},
   volume={105},
   number={4--5},
   date={1984},
   pages={199--202},
}
\bib{GMV}{article}{
   author={Ghimenti, M.},
   author={Moroz, V.},
   author={Van Schaftingen, J.},
   title={Least action nodal solutions for the quadratic Choquard equation},
   journal={to appear in Proc. Amer. Math. Soc.},
   eprint={arXiv:1511.04779},
   date={2015},
}

\bib{GV}{article}{
   author={Ghimenti,M.},
   author={Van Schaftingen,J.},
   title={Nodal solutions for the Choquard equation},
   journal={J. Funct. Anal.},
   volume={271},
   date={2016},
   number={1},
   pages={107--135},
}

\bib{J1}{article}{
   author={Jones, K. R. W.},
   title={Gravitational self-energy as the litmus of reality},
   journal={Modern Physics Letters A },
   volume={10},
   date={1995},
   number={8},
   pages={657--667},
}
\bib{J2}{article}{
   author={Jones, K. R. W.},
   title={Newtonian Quantum Gravity},
   journal={Australian Journal of Physics},
   volume={48},
   date={1995},
   number={6},
   pages={1055--1082},
}
\bib{L}{article}{
   author={Lieb, Elliott H.},
   title={Existence and uniqueness of the minimizing solution of Choquard's
   nonlinear equation},
   journal={Studies in Appl. Math.},
   volume={57},
   date={1976/77},
   number={2},
   pages={93--105},
}
\bib{LL}{book}{
   author={Lieb, E.},
   author={Loss, M.},
   title={Analysis},
   series={Graduate studies in mathematics, vol 14.},
   publisher={American Mathematical Society},
   date={1997},
}
\bib{Lions}{article}{
   author={Lions, P.-L.},
   title={The Choquard equation and related questions},
   journal={Nonlinear Anal.},
   volume={4},
   date={1980},
   number={6},
   pages={1063--1072},
   issn={0362-546X},
}
\bib{LW}{article}{
   author={Liu, Zhaoli},
   author={Wang, Zhi-Qiang},
   title={On the Ambrosetti-Rabinowitz superlinear condition},
   journal={Adv. Nonlinear Stud.},
   volume={4},
   date={2004},
   number={4},
   pages={563--574},
   issn={1536-1365},
}

\bib{MPT}{article}{
   author={Moroz, Irene M.},
   author={Penrose, Roger},
   author={Tod, Paul},
   title={Spherically-symmetric solutions of the Schr\"odinger-Newton
   equations},
   journal={Classical Quantum Gravity},
   volume={15},
   date={1998},
   number={9},
   pages={2733--2742},
   issn={0264-9381},
}

\bib{MorozVanSchaftingen2013JFA}{article}{
   author={Moroz, Vitaly},
   author={Van Schaftingen, Jean},
   title={Groundstates of nonlinear Choquard equations: existence, qualitative
properties and decay asymptotics},
   journal={J. Funct. Anal.},
   volume={265},
   date={2013},
   pages={153--184},
}

\bib{MV}{article}{
   author={Moroz, Vitaly},
   author={Van Schaftingen, Jean},
   title={Groundstates of nonlinear Choquard equations:
   Hardy-Littlewood-Sobolev critical exponent},
   journal={Commun. Contemp. Math.},
   volume={17},
   date={2015},
   number={5},
   pages={1550005, 12},
   issn={0219-1997},
}

\bib{MVSReview}{article}
{
   author={Moroz, Vitaly},
   author={Van Schaftingen, Jean},
   title={A guide to the Choquard equation},
   eprint={arXiv:1606.02158},
}

\bib{P}{book}{
   author={Pekar, S.},
   title={Untersuchungen \"{u}ber die Elektronentheorie der Kristalle},
   publisher={Akademie-Verlag, Berlin},
   date={1954},
}

\bib{Penrose1996}{article}{
   author={Penrose, Roger},
   title={On gravity's role in quantum state reduction},
   journal={Gen. Relativity Gravitation},
   volume={28},
   date={1996},
   number={5},
   pages={581--600},
   issn={0001-7701},
}

\bib{R}{article}{
   author={Rabinowitz, Paul H.},
   title={On a class of nonlinear Schr\"odinger equations},
   journal={Z. Angew. Math. Phys.},
   volume={43},
   date={1992},
   number={2},
   pages={270--291},
   issn={0044-2275},
}
\bib{SW}{article}{
   author={Stein, E. M.},
   author={Weiss, Guido},
   title={Fractional integrals on $n$-dimensional Euclidean space},
   journal={J. Math. Mech.},
   volume={7},
   date={1958},
   pages={503--514},
}
\bib{SW2}{article}{
   author={Szulkin, Andrzej},
   author={Weth, Tobias},
   title={The method of Nehari manifold},
   conference={
      title={Handbook of nonconvex analysis and applications},
   },
   book={
      publisher={Int. Press},
      address={Somerville, Mass.},
   },
   date={2010},
   pages={597--632},
}

\bib{WZ}{article}{
   author={Wang, Zhengping},
   author={Zhou, Huan-Song},
   title={Sign-changing solutions for the nonlinear Schr\"odinger-Poisson
   system in $\mathbb{R}^3$},
   journal={Calc. Var. Partial Differential Equations},
   volume={52},
   date={2015},
   number={3-4},
   pages={927--943},
   issn={0944-2669},
}

\bib{W}{book}{
   author={Willem, Michel},
   title={Minimax theorems},
   series={Progress in Nonlinear Differential Equations and their
   Applications, 24},
   publisher={Birkh\"auser},
   address={Boston, Mass.},
   date={1996},
   pages={x+162},
   isbn={0-8176-3913-6},
}
\bib{W2}{book}{
   author={Willem, Michel},
   title={Functional analysis},
   subtitle={Fundamentals and Applications},
   series={Cornerstones, vol.XIV},
   publisher={ Birkh\"{a}user},
   address={Basel},
   date={2013},
   }

\bib{Y}{article}{
   author={Ye, Hongyu},
   title={The existence of least energy nodal solutions for some class of
   Kirchhoff equations and Choquard equations in $\mathbb{R}^N$},
   journal={J. Math. Anal. Appl.},
   volume={431},
   date={2015},
   number={2},
   pages={935--954},
   issn={0022-247X},
}

\end{biblist}
\end{bibdiv}
\end{document}